\documentclass[12pt]{amsart}
\usepackage{amssymb}
\usepackage{hyperref}

\topmargin -0.5cm
\oddsidemargin -0.5cm
\evensidemargin -0.5cm
\topskip     0pt
\headheight  0pt
\footskip   18pt
\textheight 22.5cm
\textwidth 17cm

\newtheorem{thm}{Theorem}[section]
\newtheorem{lem}[thm]{Lemma}
\newtheorem{cor}[thm]{Corollary}
\newtheorem{prop}[thm]{Proposition}
\newtheorem{ex}[thm]{Example}

\newtheorem*{prob*}{Open problem}

\theoremstyle{definition}

\newtheorem{defi}[thm]{Definition}

\theoremstyle{remark}

\newtheorem{rem}[thm]{Remark}
\newtheorem*{rem*}{Remark}


\newcommand{\kringel}{\mathbin{\raise1pt\hbox{$\scriptstyle\circ$}}}
\newcommand{\pkt}{\mathbin{\raise0pt\hbox{$\scriptstyle\bullet$}}}

\newcommand{\C}{\mathbb{C}}

\newcommand{\N}{\mathbb{N}}

\newcommand{\R}{\mathbb{R}}

\newcommand{\ad}{{\rm ad}}

\newcommand{\Der}{{\rm Der}}

\newcommand{\Lf}{\mathfrak{f}}
\newcommand{\Lg}{\mathfrak{g}}

\newcommand{\Ll}{\mathfrak{l}}
\newcommand{\Ln}{\mathfrak{n}}
\newcommand{\Lm}{\mathfrak{m}}
\newcommand{\Lr}{\mathfrak{r}}
\newcommand{\Ls}{\mathfrak{s}}
\newcommand{\Lt}{\mathfrak{t}}

\newcommand{\al}{\alpha}
\newcommand{\be}{\beta}
\newcommand{\ga}{\gamma}

\newcommand{\la}{\lambda}

\newcommand{\Ga}{\Gamma}

\newcommand{\ov}{\overline}

\newcommand{\ra}{\rightarrow}

\renewcommand{\phi}{\varphi}

\newcommand{\Inn}{{\rm Inn}}
\newcommand{\AID}{{\rm AID}}
\newcommand{\CAID}{{\rm CAID}}

\begin{document}


\title[Almost Inner Derivations]{Almost inner derivations of Lie algebras}

\author[D. Burde]{Dietrich Burde}
\author[K. Dekimpe]{Karel Dekimpe}
\author[B. Verbeke]{Bert Verbeke}
\address{Fakult\"at f\"ur Mathematik\\
Universit\"at Wien\\
  Oskar-Morgenstern-Platz 1\\
  1090 Wien \\
  Austria}
\email{dietrich.burde@univie.ac.at}
\address{Katholieke Universiteit Leuven Kulak\\
E. Sabbelaan 53 bus 7657\\
8500 Kortrijk\\
Belgium}
\email{karel.dekimpe@kuleuven-kulak.be}
\address{Katholieke Universiteit Leuven Kulak\\
E. Sabbelaan 53 bus 7657\\
8500 Kortrijk\\
Belgium}
\email{bert.verbeke@kuleuven-kulak.be}

\date{\today}

\subjclass[2010]{17B40}
\keywords{Almost inner derivations, central almost inner derivations}

\begin{abstract}
We study almost inner derivations of Lie algebras, which were introduced by Gordon and Wilson in their
work on isospectral deformations of compact solvmanifolds. We compute all almost inner derivations for
low-dimensional Lie algebras, and introduce the concept of fixed basis vectors for proving that 
all almost inner derivations are inner for $2$-step nilpotent Lie algebras determined by graphs,
free $2$ and $3$-step nilpotent Lie algebras, free metabelian nilpotent Lie algebras on two generators, 
almost abelian Lie algebras and triangular Lie algebras. On the other hand we also exhibit families of 
nilpotent Lie algebras having an arbitrary large space of non-inner almost inner derivations. 
\end{abstract}

\maketitle
\section{Introduction}

Almost inner automorphisms of Lie groups and almost inner derivations of Lie algebras have been introduced 
by Gordon and Wilson \cite{GOW} in the study of isospectral deformations of compact solvmanifolds. 
A classical question going back to Hermann Weyl was whether or not isospectral manifolds are necessarily isometric. 
Milnor \cite{MIL} in $1964$ gave a negative answer by constructing two isospectral nonisometric flat tori in dimension $16$. 
Mark Kac in $1966$ gave the question the popular title ``Can One Hear the Shape of a Drum?'' The problem in two dimensions 
remained open until $1992$, when Gordon, Webb, and Wolpert constructed, based on the Sunada 
method, a pair of regions in the plane that have different shapes but identical eigenspectra. In $1984$ however, 
Gordon and Wilson wanted to construct not only finite families of isospectral nonisometric
manifolds, but rather continuous families. They constructed isospectral but nonisometric compact Riemannian manifolds 
of the form $G/\Ga$, with a simply connected exponential solvable Lie group $G$, and a discrete cocompact 
subgroup $\Ga$ of $G$. For this construction, almost inner automorphisms and almost inner derivations were crucial. \\
The concept of ``almost inner'' automorphisms and derivations, almost homomorphisms, or almost conjugate subgroups 
arises in many contexts in algebra, number theory and geometry. Another example here is the study on the relation between 
element-conjugacy and global conjugacy for algebraic groups by Larsen \cite{LAR}. This is, by a theorem of Sunada, 
closely related to the question of when a compact group can be the common covering space of a pair of non-isometric 
isospectral manifolds. \\
There are several other studies on related concepts, for example on local derivations \cite{AKU}, which are a generalization 
of almost inner derivations. \\[0.2cm]
The goal of our paper is to begin a systematic study of almost inner derivations of Lie algebras. 
Gordon and Wilson, and later others have given several examples of solvable and nilpotent Lie algebras and their almost 
inner derivations. However, the methods were ad hoc, and many examples were restricted to $2$-step nilpotent Lie 
algebras. \\
The paper is structured as follows. We first introduce almost inner and central almost inner derivations, and prove 
basic properties. Then we give examples of Lie algebras in dimension $5$ having non-inner almost inner derivations. 
In section $3$ we explain the concept of fixed basis vectors. This enables us to show, without computing all derivations, 
that for several kinds of Lie algebras all almost inner derivations are already inner. This includes $2$-step nilpotent 
Lie algebras determined by graphs, free $2$-and $3$-step nilpotent Lie algebras, free metabelian nilpotent Lie algebras 
on two generators, almost abelian Lie algebras and triangular Lie algebras. In section $7$ we prove that all metabelian 
filiform Lie algebras of dimension $n\ge 5$, except for the standard 
graded one, admit a non-inner almost inner derivation. In section $8$ we classify all complex Lie algebras of dimension $5$
admitting a non-inner almost inner derivation, and compute the space of almost inner derivations for all complex nilpotent
Lie algebras of dimension $6$. Finally, we construct infinite families of Lie algebras $\Lg$ having a space 
$\AID(\Lg)/\Inn(\Lg)$ of arbitrarily large dimension $n$, for any given $n\in \N$.

\section{Preliminaries}

Unless otherwise specified all Lie algebras we consider are over a general field $K$. 
The definition of almost inner derivations of Lie algebras in \cite{GOW} is as follows.

\begin{defi}\label{aid}
A derivation $D\in \Der(\Lg)$ of a Lie algebra $\Lg$ is said to be {\em almost inner}, if 
$D(x)\in [\Lg,x]$ for all $x\in \Lg$. The space of all almost inner derivations of $\Lg$ is denoted by $\AID(\Lg)$.
\end{defi}

A derivation is almost inner if and only if it coincides on each one-dimensional subspace with an inner 
derivation. In particular, the set of all inner derivations $\Inn (\Lg)$ is a subset of $\AID (\Lg)$.
Note that it is not enough in general to check the condition $D(x)\in [\Lg,x]$ only for basis vectors of $\Lg$.
We introduce a new subspace of $\AID(\Lg)$ as follows.

\begin{defi}\label{central}
An almost inner derivation $D\in \AID(\Lg)$ is called {\em central almost inner}
if there exists an $x\in \Lg$ such that $D-\ad (x)$ maps $\Lg$ to the center $Z(\Lg)$. We denote the space
of central almost inner derivations of $\Lg$ by  $\CAID(\Lg)$.
\end{defi}

The subspaces $\Inn(\Lg), \CAID(\Lg)$ and $\AID(\Lg)$ of $\Der(\Lg)$ become Lie subalgebras via the Lie bracket  $[D,D']=DD'-D'D$.

\begin{prop}
We have the following inclusions of Lie subalgebras
\[
\Inn(\Lg) \subseteq \CAID (\Lg) \subseteq \AID(\Lg)\subseteq \Der(\Lg).
\]
\end{prop}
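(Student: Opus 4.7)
The proposition has two parts: (i) the set-theoretic inclusions, and (ii) the verification that each of $\Inn(\Lg)$, $\CAID(\Lg)$, $\AID(\Lg)$ is closed under the commutator bracket inside $\Der(\Lg)$. The set-theoretic part is essentially immediate from the definitions: if $D=\ad(y)\in\Inn(\Lg)$ then taking the same $y$ in Definition~\ref{central} gives $D-\ad(y)=0$, which trivially maps $\Lg$ into $Z(\Lg)$, so $D\in\CAID(\Lg)$; the inclusion $\CAID(\Lg)\subseteq\AID(\Lg)$ is built into Definition~\ref{central}; and $\AID(\Lg)\subseteq\Der(\Lg)$ is built into Definition~\ref{aid}. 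Closedness of $\Der(\Lg)$ and $\Inn(\Lg)$ under brackets is the standard fact that $[\Der(\Lg),\Der(\Lg)]\subseteq \Der(\Lg)$ and $[\ad(x),\ad(y)]=\ad([x,y])$.

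The real work is showing that $\AID(\Lg)$ is closed under the bracket. Given $D_1,D_2\in\AID(\Lg)$ and $x\in\Lg$, the plan is to write $D_i(x)=[u_i,x]$ for suitable $u_i=u_i(x)\in\Lg$ (the defining property of almost inner derivations) and to expand $D_1D_2(x)$ using the Leibniz rule together with the Jacobi identity. Concretely, $D_1D_2(x)=D_1([u_2,x])=[D_1(u_2),x]+[u_2,D_1(x)]=[D_1(u_2),x]+[u_2,[u_1,x]]$, and similarly for $D_2D_1(x)$. The two cross terms $[u_1,[u_2,x]]-[u_2,[u_1,x]]$ that appear differ by $[[u_1,u_2],x]$ thanks to Jacobi, so the entire expression $[D_1,D_2](x)$ collapses to a single bracket of the form $[v,x]$ with $v=D_1(u_2)-D_2(u_1)+[u_2,u_1]\in\Lg$. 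This lands in $[\Lg,x]$, as required. The main obstacle is the bookkeeping in this manipulation; one has to be careful that $u_i$ depends on $x$ (almost inner derivations are only pointwise inner), but since we only need a single witness in $[\Lg,x]$ for each fixed $x$, this is harmless.

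For $\CAID(\Lg)$, suppose $D_i-\ad(x_i)$ maps $\Lg$ into $Z(\Lg)$, so write $D_i(y)=[x_i,y]+z_i(y)$ with $z_i(y)\in Z(\Lg)$. The plan is to compute $[D_1,D_2](y)$ and show it has the form $[[x_1,x_2],y]+\zeta(y)$ with $\zeta(y)\in Z(\Lg)$. Expanding $D_1D_2(y)=D_1([x_2,y])+D_1(z_2(y))$, the key observations are that $D_1$ sends $Z(\Lg)$ into $Z(\Lg)$ (because $[x_1,z]=0$ for central $z$, leaving only the central term $z_1(z)$), and that applying Leibniz to $D_1([x_2,y])$ produces $[[x_1,x_2],y]+[x_2,[x_1,y]]$ plus central terms of the form $[z_1(x_2),y]$ and $[x_2,z_1(y)]$, both of which vanish since $z_1(\cdot)\in Z(\Lg)$. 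Subtracting the symmetric expression for $D_2D_1(y)$ and applying Jacobi to combine $[x_2,[x_1,y]]-[x_1,[x_2,y]]=-[[x_1,x_2],y]$ yields $[D_1,D_2](y)=[[x_1,x_2],y]+z_1(z_2(y))-z_2(z_1(y))$. Hence $[D_1,D_2]-\ad([x_1,x_2])$ lands in $Z(\Lg)$, and since $[D_1,D_2]\in\AID(\Lg)$ by the preceding paragraph, we conclude $[D_1,D_2]\in\CAID(\Lg)$. The only subtlety is verifying stability of $Z(\Lg)$ under almost inner derivations of the special form $D_i$, which is where the hypothesis $D_i-\ad(x_i)\colon\Lg\to Z(\Lg)$ is essential.
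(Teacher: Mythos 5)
Your proposal is correct and follows essentially the same route as the paper: the $\AID$ closure is the identical Leibniz-plus-Jacobi computation producing the witness $D_1(u_2)-D_2(u_1)+[u_2,u_1]$, and the $\CAID$ closure likewise exhibits $[D_1,D_2]-\ad([x_1,x_2])$ as a map into $Z(\Lg)$. The only cosmetic difference is that you expand the $\CAID$ case element-wise via $D_i=\ad(x_i)+z_i$, whereas the paper organizes the same computation as the operator identity $[C-\ad(y),C'-\ad(y')]=[C,C']-\ad\bigl(C(y')-C'(y)-[y,y']\bigr)$ using $[D,\ad(x)]=\ad(D(x))$.
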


\begin{proof}
Let $D,D'\in \AID(\Lg)$ and $x\in \Lg$. Then there exist $y,y'\in \Lg$ such that $D(x)=[y,x]$ and
$D'(x)=[y',x]$. Using the derivation rule and the Jacobi identity we obtain
\begin{align*}
[D,D'](x) & = (DD')(x)-(D'D)(x) \\
          & = D([y',x])-D'([y,x]) \\
          & = [D(y'),x]+[y',D(x)]-[D'(y),x]-[y,D'(x)]\\
          & = [D(y'),x]+[y',[y,x]]-[D'(y),x]-[y,[y',x]]\\
          & =  [D(y'),x]-[[y,y'],x]-[D'(y),x].
\end{align*}
Hence $[D,D'](x)=[D(y')-[y,y']-D'(y),x]\in [\Lg,x]$ for all $x\in \Lg$, so that $[D,D']\in \AID(\Lg)$. \\
Let $C,C'\in \CAID(\Lg)$.  Then there exist $y,y'\in \Lg$ such that $C-\ad(y)$ and $C'-\ad(y')$ map
$\Lg$ to $Z(\Lg)$. Using $[D,\ad(x)]=\ad (D(x))$ for $D\in \Der(\Lg)$ we obtain
\begin{align*}
[C-\ad(y),C'-\ad(y')] & = [C,C']-[C,\ad(y')]-[\ad(y),C']+[\ad(y),\ad(y')] \\
 & = [C,C']-\ad(C(y'))+\ad(C'(y))+\ad([y,y']),
\end{align*}
so that $[C,C']-\ad (C(y')-C'(y)-[y,y'])=[C-\ad(y),C'-\ad(y')]$ maps $\Lg$ to $Z(\Lg)$, and hence 
$[C,C']\in \CAID(\Lg)$.
\end{proof}

\begin{prop}
The subalgebra $\CAID (\Lg)$ is a Lie ideal in $\AID(\Lg)$, and $\Inn(\Lg)$ is a Lie ideal 
in all subalgebras of $\Der(\Lg)$ containing it.
\end{prop}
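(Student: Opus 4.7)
The plan is to verify the two ideal properties using the standard identity $[D,\ad(y)]=\ad(D(y))$ for any derivation $D\in\Der(\Lg)$ and any $y\in\Lg$, together with the elementary fact that every derivation $D\in\Der(\Lg)$ preserves the center $Z(\Lg)$. The latter follows from $[D(z),u]=D([z,u])-[z,D(u)]=0$ whenever $z\in Z(\Lg)$ and $u\in\Lg$.

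For the second assertion I would start with the easier one: if $\Lh$ is any Lie subalgebra of $\Der(\Lg)$ containing $\Inn(\Lg)$, then for $D\in\Lh$ and $\ad(x)\in\Inn(\Lg)$ one has $[D,\ad(x)]=\ad(D(x))\in\Inn(\Lg)$, which directly exhibits $\Inn(\Lg)$ as a Lie ideal of $\Lh$.

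For the first assertion, let $D\in\AID(\Lg)$ and $C\in\CAID(\Lg)$, and pick $y\in\Lg$ such that $E:=C-\ad(y)$ maps $\Lg$ into $Z(\Lg)$. Using the identity above,
\[
[D,C]=[D,E]+[D,\ad(y)]=[D,E]+\ad(D(y)),
\]
so it suffices to check that $[D,E]$ sends $\Lg$ into $Z(\Lg)$; this will then realize $[D,C]$ modulo $\ad(D(y))$ as a map into $Z(\Lg)$, proving $[D,C]\in\CAID(\Lg)$. For any $x\in\Lg$, the term $E(D(x))$ lies in $Z(\Lg)$ because $E(\Lg)\subseteq Z(\Lg)$, while $D(E(x))$ lies in $Z(\Lg)$ because $E(x)\in Z(\Lg)$ and $D$ preserves the center. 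Hence $[D,E](x)=D(E(x))-E(D(x))\in Z(\Lg)$, as required.

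I do not expect any real obstacle here; the only conceptual point is recognizing that the almost inner property of $D$ is not actually needed for ideality of $\CAID(\Lg)$ in $\AID(\Lg)$, only that $D$ is a derivation (so that it preserves $Z(\Lg)$ and satisfies $[D,\ad(y)]=\ad(D(y))$). The closure of $\AID(\Lg)$ under the bracket, which is the genuinely non-trivial computation, has already been established in the preceding proposition.
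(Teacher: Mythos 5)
Your proposal is correct and follows essentially the same route as the paper: both rest on the identity $[D,\ad(y)]=\ad(D(y))$, write $[D,C]-\ad(D(y))=[D,C-\ad(y)]$, and check that this bracket lands in $Z(\Lg)$ because $C-\ad(y)$ maps into the center and $D$ preserves the center. Your side remark that only the derivation property of $D$ (not almost-innerness) is needed for the center computation is accurate, and you correctly note that membership of $[D,C]$ in $\AID(\Lg)$ is supplied by the preceding proposition.
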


\begin{proof}
Let $C\in \CAID(\Lg)$ and $D\in \AID(\Lg)$. We need to show that $[D,C]\in \CAID(\Lg)$.
We already know that $[D,C]\in \AID(\Lg)$. Fix an element $x\in \Lg$ such that $C':=C-\ad(x)$ maps 
$\Lg$ to $Z(\Lg)$.  Define $D':=[D,C]-\ad (D(x))$. Then $D'=[D,C']$ because of $\ad(D(x))=[D,\ad(x)]$,
and $D'$ maps $\Lg$ to $Z(\Lg)$, since
\begin{align*}
D'(y) & = [D,C'](y) \\
      & = D(C'(y))-C'(D(y))
\end{align*}
for all $y\in \Lg$, because $C'$ maps $\Lg$ to $Z(\Lg)$ and $D$ maps $Z(\Lg)$ to $Z(\Lg)$. \\
Finally, $[D,\ad(x)]=\ad(D(x))$ is inner for all $D\in \Der(\Lg)$, so that $\Inn(\Lg)$ is an ideal
in all subalgebras of $\Der(\Lg)$ containing $\Inn(\Lg)$.
\end{proof}

\begin{rem}
We conjecture that $\AID(\Lg)$ is always a Lie ideal in $\Der(\Lg)$. However, this seems not
to be known, and there is no obvious algebraic argument for it.
\end{rem}

As a first example we will compute the almost inner derivations of the Heisenberg Lie algebra $\Ln_3(K)$,
given by the Lie brackets $[e_1,e_2]=e_3$. By this notation we will mean that $\Ln_3(K)$ is a 3-dimensional vector space over $K$ with basis vectors $e_1,\,e_2$ and $e_3$. The Lie brackets between basis vectors which are not specified are assumed to be zero, so $[e_1,e_3]=[e_2,e_3]=0$. 

\begin{ex}\label{2.6}
For $\Lg=\Ln_3(K)$ we have $\AID(\Lg)=\Inn(\Lg)$.
\end{ex}

Indeed, every derivation $D$ of $\Ln_3(K)$ is of the form $D(e_1)=\al_1e_1+\al_2e_2+\al_3e_3$,
$D(e_2)=\al_4e_1+\al_5e_2+\al_6e_3$, and $D(e_3)=(\al_1+\al_5)e_3$. Assume that $D$ is almost inner.
Then $D(e_1)\in [\Lg,e_1]=\langle e_3\rangle$, so that $\al_1=\al_2=0$. In the same way
$D(e_2)\in [\Lg,e_2]=\langle e_3\rangle$ implies that $\al_4=\al_5=0$, and $D(e_3)\in [\Lg,e_3]=0$ gives
$\al_1+\al_5=0$. It follows that $D\in \Inn(\Lg)$. \\[0.5cm]
The next examples show that there exist Lie algebras having more interesting almost inner derivations,
i.e., having non-inner almost inner derivations. Let $\Lg_{5,6}$ be the filiform nilpotent Lie algebra
with basis $\{e_1,\ldots ,e_5 \}$ and Lie brackets
\begin{align*}
[e_1,e_i] & = e_{i+1}, \; 2\le i\le 4, \\
[e_2,e_3] & = e_5.
\end{align*}
Let  $\Lg_{5,3}$ be the $3$-step nilpotent Lie algebra with basis $\{e_1,\ldots ,e_5 \}$ and Lie brackets
\[
[e_1,e_2] = e_4,\; [e_1,e_4]=e_5,\; [e_2,e_3]=e_5. \\
\]
Denote by $E_{ij}$ the matrix with entry $1$ at position $(i,j)$ and $0$ otherwise. As a linear map,
it maps $e_k$ to $0$ for $k\neq j$, and $e_j$ to $e_i$.

\begin{ex}\label{2.7}
For $\Lg=\Lg_{5,6}$ we have $\AID(\Lg)=\Inn(\Lg)\oplus \langle E_{5,2}\rangle$, and for 
$\Lg=\Lg_{5,3}$ we have $\AID(\Lg)=\Inn(\Lg)\oplus \langle E_{5,3}\rangle$.
\end{ex}

The proof follows again by a direct computation. The derivation $D=E_{5,2}$ for $\Lg_{5,6}$ is not inner,
but almost inner (in fact, central almost inner). The same holds for $D=E_{5,3}$ for $\Lg_{5,3}$.
We will see later also examples of Lie algebras $\Lg$ where the inclusion 
$\CAID(\Lg)\subseteq \AID(\Lg)$ is strict. \\[0.5cm]
The following table shows the result of a computation of almost inner derivations for all complex
nilpotent Lie algebras of dimension $5$. The classification of such Lie algebras is taken from \cite{MAG};
$c(\Lg)$ denotes the nilpotency class of $\Lg$, and $d(\Lg)$ the derived length. If the entry in the last column is non zero, it gives an example of an almost inner derivation which is not inner. 
\vspace*{0.5cm}
\begin{center}
\begin{tabular}{c|c|c|c|c|c|c|c}
Magnin & $c(\Lg)$ & $d(\Lg)$ & $\dim \Inn(\Lg)$  & $\dim \CAID(\Lg)$ &  $\dim \AID(\Lg)$ & $\dim \Der(\Lg)$ & $D $\\
\hline
$\Lg_{5,6}$ & $4$ & $2$ & $4$ & $5$ & $5$ & $8$  & $E_{5,2}$\\
\hline
$\Lg_{5,5}$ & $4$ & $2$ & $4$ & $4$ & $4$ & $9$  & $0$\\
\hline
$\Lg_{5,3}$ & $3$ & $2$ & $4$ & $5$ & $5$ & $10$  & $E_{5,3}$\\
\hline
$\Lg_{5,4}$ & $3$ & $2$ & $3$ & $3$ & $3$ & $10$  & $0$\\
\hline
$\Ln_4\oplus \C$ & $3$ & $2$ & $3$ & $3$ & $3$ & $11$  & $0$\\
\hline
$\Lg_{5,2}$ & $2$ & $2$ & $3$ & $3$ & $3$ & $13$  & $0$\\
\hline
$\Lg_{5,1}$ & $2$ & $2$ & $4$ & $4$ & $4$ & $15$  & $0$\\
\hline
$\Ln_3\oplus \C^2$ & $2$ & $2$ & $2$ & $2$ & $2$ & $16$  & $0$\\
\hline
$\C^5$ & $1$ & $1$ & $0$ & $0$ & $0$ & $25$  & $0$\\
\end{tabular}
\end{center}
\vspace*{0.5cm}
A further computation shows that $5$ is the minimal dimension where we have a complex Lie algebra admitting a non-inner
almost inner derivation:

\begin{prop}
Let $\Lg$ be a complex Lie algebra of dimension $n\le 4$. Then we have $\AID(\Lg)=\CAID(\Lg)=\Inn(\Lg)$.
\end{prop}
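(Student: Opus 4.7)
The plan is to reduce the claim to the inclusion $\AID(\Lg) \subseteq \Inn(\Lg)$, since the chain $\Inn(\Lg) \subseteq \CAID(\Lg) \subseteq \AID(\Lg)$ is already established, and then to verify this inclusion for every isomorphism class in the classification of complex Lie algebras of dimension at most $4$.

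The easy cases come first. If $\Lg$ is abelian then $\AID(\Lg)=0=\Inn(\Lg)$, and if $\Lg$ is semisimple then $\Der(\Lg)=\Inn(\Lg)$ by Whitehead's first lemma; in dimension $\le 4$ the only semisimple algebra is $\mathfrak{sl}_2(\C)$. The Heisenberg algebra $\Ln_3(\C)$ has been treated in Example \ref{2.6}, and the two-dimensional non-abelian algebra is disposed of immediately, since every derivation there is already inner. The remaining three-dimensional Lie algebras, namely $\C \oplus \mathfrak{r}_2$ and the one-parameter family of non-nilpotent solvable algebras, are handled by a direct calculation in the spirit of Example \ref{2.6}: one writes a general derivation in coordinates, applies the constraints $D(e_i)\in[\Lg,e_i]$ for basis vectors together with any additional constraints forced by $D(x)\in[\Lg,x]$ for suitable mixed $x$, and checks that the resulting space coincides with $\Inn(\Lg)$.

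For dimension $4$ we invoke the classification and repeat this type of argument for the finite list that arises. The case $\mathfrak{sl}_2(\C)\oplus\C$ is settled by noting that any derivation is block diagonal (because $\mathfrak{sl}_2(\C)$ is perfect and has trivial centre), so it decomposes as an inner derivation of $\mathfrak{sl}_2(\C)$ plus a scalar endomorphism of the central summand $\langle z\rangle$; the almost inner condition $D(z)\in[\Lg,z]=0$ then kills the residual scalar. The nilpotent algebras in dimension $4$, namely $\C^4$, $\Ln_3(\C)\oplus\C$ and the filiform $\Ln_4$, together with the remaining solvable non-nilpotent ones from the classification list, are all handled by the same coordinate computation. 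The main obstacle is bureaucratic rather than conceptual: one has to process a non-trivial number of isomorphism classes, each reducing to the same kind of linear-algebra manipulation as in Example \ref{2.6}. The outcome in every case is $\AID(\Lg)=\Inn(\Lg)$, which matches the fact that the first non-inner almost inner derivation appears only in dimension $5$, as seen in Example \ref{2.7}.
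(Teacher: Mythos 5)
Your proposal is correct and follows essentially the same route as the paper, which offers no written proof beyond the remark that ``a further computation shows'' the result, i.e.\ a case-by-case verification of $\AID(\Lg)=\Inn(\Lg)$ over the classification of complex Lie algebras of dimension at most $4$. Your structural shortcuts (abelian, semisimple, direct sums with a central factor, the Heisenberg case from Example \ref{2.6}) are sound and merely organize the same finite check.
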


We will conclude this section with a few more easy facts on almost inner derivations.
Clearly $\Inn(\Lg)=\CAID(\Lg)=\AID(\Lg)=0$ for abelian Lie algebras.
Recall that a Lie algebra $\Lg$ is called {\em complete}, if $Z(\Lg)=0$ and $\Der(\Lg)=\Inn(\Lg)$.
Of course we have $\AID(\Lg)=\CAID(\Lg)=\Inn(\Lg)$ in this case. In particular
semisimple Lie algebras, and parabolic subalgebras of semisimple Lie algebras are complete.

\begin{prop}
Let $\Lg$ be a Lie algebra. Then the following statements hold.
\begin{itemize}
\item[$(1)$] Let $D\in \AID(\Lg)$. Then $D(\Lg)\subseteq [\Lg,\Lg]$, $D(Z(\Lg))=0$
and $D(I)\subseteq I$ for every ideal $I$ of $\Lg$. 
\item[$(2)$]  For $D\in \CAID(\Lg)$ there exists an $x \in \Lg$ such that  $D_{|[\Lg,\Lg]}=\ad (x)_{|[\Lg,\Lg]}$. 
\item[$(3)$] If $\Lg$ is $2$-step nilpotent, then $\CAID(\Lg)=\AID(\Lg)$. 
\item[$(4)$] If $Z(\Lg)=0$, then $\CAID(\Lg) = \Inn(\Lg)$. 
\item[$(5)$] If $\Lg$ is nilpotent, then  $\AID(\Lg)$ is nilpotent and all  $D\in \AID(\Lg)$ are nilpotent.
\item[$(6)$] We have $\AID(\Lg\oplus \Lg')=\AID(\Lg)\oplus \AID(\Lg')$ for the direct sum of two Lie algebras.
\end{itemize}
\end{prop}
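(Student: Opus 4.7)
The plan is to verify each of the six items separately, treating most as routine consequences of the defining relation $D(x)\in[\Lg,x]$ and reserving a touch more thought for $(5)$ and $(6)$.

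For $(1)$ I would simply unpack the definition: $D(x)\in[\Lg,x]\subseteq[\Lg,\Lg]$ gives the first inclusion; if $x\in Z(\Lg)$ then $[\Lg,x]=0$ kills $D(x)$; and for an ideal $I$ the vector $x\in I$ yields $D(x)\in[\Lg,x]\subseteq I$. Part $(2)$ follows by applying $(1)$ to $C:=D-\ad(x)$, which is again almost inner: the derivation identity gives $C([y,z])=[C(y),z]+[y,C(z)]$, and both terms vanish since $C(\Lg)\subseteq Z(\Lg)$ by hypothesis, so $C\equiv 0$ on $[\Lg,\Lg]$. Part $(3)$ is immediate from $(1)$: if $\Lg$ is $2$-step nilpotent then $[\Lg,\Lg]\subseteq Z(\Lg)$, so every $D\in\AID(\Lg)$ already maps $\Lg$ into $Z(\Lg)$ and one may take $x=0$ in the definition of $\CAID(\Lg)$. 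Part $(4)$ is tautological: $D-\ad(x)$ maps into $Z(\Lg)=0$, so $D=\ad(x)$.

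For $(5)$ I would filter $\Lg$ by its lower central series $\Lg=\Lg^1\supseteq\Lg^2\supseteq\cdots\supseteq\Lg^{c+1}=0$, where $\Lg^{k+1}=[\Lg,\Lg^k]$. For $x\in\Lg^k$, one has $D(x)\in[\Lg,x]\subseteq[\Lg,\Lg^k]=\Lg^{k+1}$, so $D$ lowers the filtration by one and consequently $D^{c}=0$. Thus every $D\in\AID(\Lg)$ is a nilpotent endomorphism of $\Lg$, and Engel's theorem applied to the Lie subalgebra $\AID(\Lg)\subseteq\End(\Lg)$ of nilpotent operators concludes that $\AID(\Lg)$ is nilpotent as a Lie algebra.

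For $(6)$ the key observation is that for $x\in\Lg\subseteq\Lg\oplus\Lg'$ one has $[\Lg\oplus\Lg',x]=[\Lg,x]\subseteq\Lg$, since $[\Lg',\Lg]=0$ inside the direct sum, and symmetrically for $\Lg'$. Therefore any $D\in\AID(\Lg\oplus\Lg')$ preserves both summands and splits as $D=D_1\oplus D_2$; the restrictions $D_i$ are derivations of $\Lg_i$ and are almost inner by the defining condition applied to vectors in the respective factor. Conversely, any $D_1\oplus D_2$ with $D_i\in\AID(\Lg_i)$ is almost inner because $[\Lg\oplus\Lg',(x,x')]=[\Lg,x]\oplus[\Lg',x']$. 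I do not anticipate a serious obstacle anywhere; the mildest care is needed in $(6)$ to see that the almost-inner condition forces the summand-preservation which general derivations need not satisfy, and the only external tool invoked is Engel's theorem in $(5)$.
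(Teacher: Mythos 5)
Your proposal is correct and follows essentially the same route as the paper's proof: items $(1)$--$(4)$ by direct unpacking of the definitions (with the same Leibniz-rule computation for $(2)$), item $(5)$ via the lower central series and Engel's theorem, and item $(6)$ by restriction to the summands. The only cosmetic remark is that in $(2)$ what you actually use is not part $(1)$ but the fact that $C=D-\ad(x)$ is a derivation with image in $Z(\Lg)$ — which is exactly the computation you then write down, so nothing is missing.
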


\begin{proof}
By definition, an almost inner derivation maps $\Lg$ into $[\Lg,\Lg]$ and the center $Z(\Lg)$ to $0$.
Let $x\in I$. Then we have $D(x)\in [\Lg,x] \subseteq [\Lg,I]\subseteq I$, and $(1)$ follows. \\
Given $D\in \CAID(\Lg)$, there exists an $x\in \Lg$ such that $D'=D-\ad (x)$ satisfies $D'(\Lg)\subseteq Z(\Lg)$.
Since $D'$ is also a derivation we have
\[
D'([u,v]) = [D'(u), v ] + [ u, D'(v) ] = 0
\]
for all $u,v\in \Lg$. This shows $(2)$. If $\Lg$ is $2$-step nilpotent then $D(\Lg)\subseteq 
[\Lg,\Lg] \subseteq Z(\Lg)$ for any $D\in \AID(\Lg)$. Hence $\AID(\Lg)\subseteq \CAID(\Lg)$, and we have equality. 
This shows $(3)$. Suppose that $Z(\Lg)=0$ and $D\in \CAID(\Lg)$. Then there exists an $x\in \Lg$ such that 
$D(x)-\ad(x)=0$. Hence $D$ is inner. This shows $(4)$. 
Let $D\in \AID(\Lg)$  and $x\in \Lg$, then $D^k(x)\in [ \Lg,[\Lg,[\ldots,[\Lg,x]\ldots]]]$ ($k$  times $\Lg$). If $k$ is higher than the nilpotence class of $\Lg$, we have that $D^k(x)=0$, hence $D$ is nilpotent. By Engel's theorem
$\AID(\Lg)$ is nilpotent, and $(5)$ follows. For the last statement, let $D\in \AID(\Lg\oplus \Lg')$. Then the 
restrictions are again almost inner derivations, i.e., $D_{|\Lg}\in \AID(\Lg)$ and $D_{|\Lg'}\in \AID(\Lg')$.
It is easy to see that the map $D\mapsto D_{|\Lg}\oplus D_{|\Lg'}$ gives a one-to-one correspondence
between  $\AID(\Lg\oplus \Lg')$ and $\AID(\Lg)\oplus \AID(\Lg')$.
\end{proof}

\section{Fixed basis vectors}

For the computation of almost inner derivations of a given Lie algebra one does not always
need to know its derivation algebra explicitly. Instead one can use a concept, which we will call
{\em fixed basis vectors}. This is very useful, also for proving several results on almost inner derivations.
Unfortunately the definition is not particularly clear, although it is elementary. We will need to explain it with
some examples. 

For the rest of this section, $\Lg$ is an $n$-dimensional Lie algebra over a field $K$  and 
with {\em chosen basis} $\{e_1,\ldots ,e_n\}$. 
For $x\in \Lg$ we denote the centralizer of $x$ by $C_{\Lg}(x)=\{y\in \Lg\mid [x,y]=0 \}$.
Let $D$ be an almost inner derivation $D$ of $\Lg$. Then there exists a map $\phi_D\colon \Lg\ra \Lg$ such that
\[
D(x)=[x,\phi_D(x)]
\]
for all $x\in \Lg$. This map is not unique as we may change $\phi_D(x)$ to $\phi_D(x)+y$ for any $y\in C_{\Lg}(x)$.
It need also not be linear in general. If $x=\sum_{j=1}^n \al_je_j$, then we denote by $t_i(x)=\al_i$ the
$i$-th coordinate of $x$ with respect to the given basis.

\begin{defi}\label{3.1}
Let $D$ be an almost inner derivation of $\Lg$ determined by a map $\phi_D:\Lg\to \Lg$. We will say that
a basis vector $e_i$ is a {\em fixed vector for $D$ with fixed value $\al\in K$} if and only if for all
$j \in \{1,2,\ldots n\}:$
\[ 
\mbox{ if }e_j\not \in C_\Lg(e_i)\mbox{ then } t_i(\phi_D(e_j) )=\alpha.
\]
\end{defi}
Note that the $\al$ must be the same for all $j$ where this condition applies. As an example, consider
the Heisenberg Lie algebra $\Ln_3(K)$ with basis $\{e_1,e_2,e_3\}$ and Lie bracket $[e_1,e_2]=e_3$.

\begin{ex}
Let $\Lg=\Ln_3(K)$, and $D$ an almost inner derivation of $\Lg$ given by a map $\phi_D\colon \Lg\ra\Lg$.
Then every basis vector $e_i$ is fixed. 
\end{ex}

For $i=1$ we have $C_{\Lg}(e_1)=\langle e_1,e_3 \rangle$, and the condition just applies for $j=2$: since 
$e_2\not\in C_{\Lg}(e_1)$, we must have $t_1(\phi_D(e_2))=\al$. Certainly this is true, with the $\al$ given by
the map $\phi_D$. The same holds for $i=2$, where we have $C_{\Lg}(e_2)=\langle e_2,e_3 \rangle$. For $i=3$ we have
$C_{\Lg}(e_3)=\Lg$, so that the condition is vacuously true. \\[0.5cm]
The importance of finding fixed vectors comes from the following fact. If each basis vector for 
every almost inner derivations is fixed, then we have $\AID(\Lg)=\Inn(\Lg)$. We will prove this result in 
Corollary $\ref{allfixed}$.
Often we can show that every basis vector is fixed without knowing the structure 
of $\Der(\Lg)$. A trivial example is the following lemma.

\begin{lem}\label{3.3}
Let $\Lg$ be a Lie algebra with given basis $\{e_1,\ldots ,e_n\}$, such that for given $i$, the number of
basis vectors in $C_{\Lg}(x_i)$ is equal to $\dim(\Lg)$ or  $\dim(\Lg)-1$. Then the basis vector $e_i$ is fixed.
\end{lem}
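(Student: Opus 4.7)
The plan is to unwind the definition of fixed vector (Definition \ref{3.1}) and observe that in each of the two allowed cases the condition placed on $e_i$ is essentially vacuous. Fix an arbitrary almost inner derivation $D\in\AID(\Lg)$ together with an associated map $\phi_D\colon\Lg\ra\Lg$ satisfying $D(x)=[x,\phi_D(x)]$; I need to produce a fixed value $\al\in K$ making $e_i$ fixed for $D$.

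First I would treat the case where all $n$ basis vectors lie in $C_\Lg(e_i)$. Then the set of indices $j\in\{1,\ldots,n\}$ with $e_j\not\in C_\Lg(e_i)$ is empty, so the implication in Definition \ref{3.1} is vacuously satisfied for every $\al\in K$, and $e_i$ is fixed (taking, say, $\al=0$). Next I would handle the case where exactly $n-1$ basis vectors lie in $C_\Lg(e_i)$. Then there is a unique index $k$ with $e_k\not\in C_\Lg(e_i)$, so the implication in Definition \ref{3.1} is non-vacuous only at $j=k$. Setting
\[
\al \;:=\; t_i(\phi_D(e_k)) \;\in\; K
\]
makes the required equality $t_i(\phi_D(e_k))=\al$ hold by definition, while for every other $j$ the implication is vacuous. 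Hence $e_i$ is a fixed vector with fixed value $\al$.

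The proof requires no real obstacle: the lemma amounts to the observation that the coherence requirement ``the same $\al$ must work for all applicable $j$'', which is the only non-trivial part of Definition \ref{3.1}, is automatic as soon as at most one basis vector fails to commute with $e_i$. The value of the lemma is therefore not in the proof but in its application: it lets one certify fixedness of $e_i$ purely from the combinatorics of the Lie bracket on the chosen basis, without any knowledge of $D$ or $\phi_D$.
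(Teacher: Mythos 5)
Your proof is correct and is essentially the paper's own argument: the paper's one-line proof observes exactly that the condition is vacuously true (all basis vectors in the centralizer) or is satisfied by the single value $\al=t_i(\phi_D(e_k))$ read off from $\phi_D$ at the unique non-commuting basis vector. You have simply written out the two cases explicitly.
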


\begin{proof}
In this case the condition for a fixed basis vector is vacuously true, or can be satisfied uniquely 
by the $\alpha$ given by the map $\phi_D$.
\end{proof}

We already saw this argument for $i=1,2,3$ in the example of $\Lg=\Ln_3(K)$ above. \\
We also want to present an example, where {\rm not} every basis vector is fixed.
For the Lie algebra $\Lg_{5,6}$ of Example $\ref{2.7}$ we will show that there is an almost inner
derivation $D$ determined by a map $\phi_D$ such that not every basis vector is fixed.

\begin{ex}
For $\Lg=\Lg_{5,6}$ and the almost inner derivation $D=E_{5,2}$ the basis vector $e_3$ is not fixed.
\end{ex}

We need to find a map $\phi_D$ representing $D$. Let $x=\al_1e_1+\ldots +\al_5e_5\in \Lg$. Define
$\phi_D$ as follows: 
\begin{itemize}
\item[$(1)$] If $\al_1\neq 0$, then $\phi_D(x)=\frac{\al_2}{\al_1}e_4$.
\item[$(2)$] If $\al_1=0$, then $\phi_D(x)=e_3$. 
\end{itemize}
It is easy to see that $D(x)=[x,\phi_D(x)]$ for all $x\in \Lg$. Definition $\ref{3.1}$ for
this $\phi_D$ and $i=3$ says: for all $j\in \{1,\ldots ,5 \}$, if $e_j\not\in C_{\Lg}(e_3)=\langle e_3,e_4,e_5\rangle$,
then $t_3(\phi_D(e_j))=\al$, each time for the same fixed $\al$. This applies for $j=1,2$, and we have
$\phi_D(e_1)=0$, $\phi_D(e_2)=e_3$, so that
\begin{align*}
t_3(\phi_D(e_2)) & = t_3(e_3)=1, \\
t_3(\phi_D(e_1)) & = t_3(0)=0. 
\end{align*}
So there is no fixed $\al$, and $e_3$ is not fixed.

\begin{lem}\label{3.5}
Let $D:\Lg \to \Lg$ be an almost inner derivation determined by a map $\phi_D:\Lg \to \Lg$.
If $e_i$ is a fixed basis vector with fixed value $\al$, then $D'=D+\ad (\alpha e_i)$ is an almost inner 
derivation which is determined by a map $\phi_{D'}:\Lg \to \Lg$ such that for all $j,k \in \{1,2,\ldots ,n\}:$
\begin{align*}
t_j(\phi_{D'}(e_k)) & = t_j(\phi_D(e_k)) \mbox{ for } i\neq j\\
t_i(\phi_{D'}(e_k)) & = 0. 
\end{align*}
\end{lem}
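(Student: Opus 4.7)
The plan is to construct $\phi_{D'}$ explicitly by first writing down a naive candidate coming from $\phi_D$, and then correcting it at those basis vectors $e_k$ that commute with $e_i$. I would proceed as follows.

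First, verify that $D'$ is almost inner by direct computation. Since $D$ and $\ad(\alpha e_i)$ are both derivations, so is $D'$. For any $x \in \Lg$,
\[
D'(x) = D(x) + [\alpha e_i, x] = [x,\phi_D(x)] - [x, \alpha e_i] = [x, \phi_D(x) - \alpha e_i],
\]
so $D'(x) \in [\Lg,x]$. This already shows that the map $\tilde\phi(x) = \phi_D(x) - \alpha e_i$ represents $D'$ on all of $\Lg$, and on vectors $x$ which are not basis vectors we may simply set $\phi_{D'}(x) = \tilde\phi(x)$.

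Next, define $\phi_{D'}$ on the basis vectors $e_k$ by splitting according to whether $e_k \in C_\Lg(e_i)$ or not. If $e_k \notin C_\Lg(e_i)$, set $\phi_{D'}(e_k) = \phi_D(e_k) - \alpha e_i$; the fixed-vector hypothesis applied to $e_i$ gives $t_i(\phi_D(e_k)) = \alpha$, hence $t_i(\phi_{D'}(e_k)) = 0$, while $t_j(\phi_{D'}(e_k)) = t_j(\phi_D(e_k))$ for $j \neq i$, as required. If instead $e_k \in C_\Lg(e_i)$, set
\[
\phi_{D'}(e_k) = \phi_D(e_k) - t_i(\phi_D(e_k))\, e_i.
\]
By construction the coordinate conditions are satisfied.

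The step that requires a moment of care is verifying that in the second case the altered value still represents $D'(e_k)$. Since $[e_k, e_i] = 0$ we have on the one hand $D'(e_k) = D(e_k) + [\alpha e_i, e_k] = D(e_k)$, and on the other
\[
[e_k, \phi_{D'}(e_k)] = [e_k, \phi_D(e_k)] - t_i(\phi_D(e_k))\,[e_k, e_i] = [e_k, \phi_D(e_k)] = D(e_k).
\]
Thus $\phi_{D'}(e_k)$ represents $D'(e_k)$, completing the construction. The only real obstacle is this split into cases: in the ``noncentralizing'' case the fixed-value hypothesis does the work, while in the ``centralizing'' case we exploit the freedom in $\phi_{D'}$ modulo $C_\Lg(e_k)$ to zero out the $e_i$-coordinate without disturbing $D'(e_k)$.
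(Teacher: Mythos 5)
Your proof is correct and follows essentially the same route as the paper: pass to the naive representative $\phi_D(x)-\alpha e_i$, then adjust at basis vectors, using the fixed-value hypothesis for $e_k\notin C_\Lg(e_i)$ and the freedom modulo the centralizer for $e_k\in C_\Lg(e_i)$. The only cosmetic difference is that the paper defines $\phi_{D'}(e_k)=\phi_D(e_k)-t_i(\phi_D(e_k))e_i$ uniformly on all basis vectors, which agrees with your two-case definition since $t_i(\phi_D(e_k))=\alpha$ in the noncentralizing case.
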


\begin{proof}
Clearly $D'$ is an almost inner derivation, and we have that
\begin{align*}
(D+\ad(\al e_i))(x)= [x,\phi_D(x)]+ [\al e_i,x] = [x, \phi_D(x) - \alpha e_i].
\end{align*}
So $D'$ is determined by the map $\tilde{\phi}_{D'}:\Lg \to \Lg: x \mapsto \phi_D(x) - \al e_i$. \\[0.2cm]
Now define the map
\[ 
\phi_{D'}: \Lg\to \Lg : x \mapsto \left\{ \begin{array}{ll}
\phi_D(x)-\al e_i & \mbox{ if }x\not\in \{e_1,e_2,\ldots,e_n\},\\
\phi_D(x)-t_i(\phi_D(x)) e_i & \mbox{ if }x\in \{e_1,e_2,\ldots,e_n\}.
\end{array}\right.
\]
We claim that $D'$ is also determined by this new map $\phi_{D'}$. Indeed, for all non basis vectors we have
$\phi_{D'}(x)=\tilde{\phi}_{D'}(x)$, so we only have to consider basis vectors. Let $e_j$ be a basis vector. Then there
are two possibilities:\\[0.2cm]
{\em Case 1:} $e_j\in C_\Lg(e_i)$. Then we have
\[
D'(e_j)  = D(e_j) = [e_j, \phi_D(e_j)] = [e_j, \phi_D(e_j)-t_i(\phi_D(x)) e_i] = [e_j, \phi_{D'}(e_j)]. 
\]
{\em Case 2:} $e_j\not \in C_\Lg(e_i)$. Then $t_i(\phi_D(e_j))=\al$, from which it follows that
$\tilde{\phi}_{D'}(e_j)=\phi_{D'}(e_j)$. \\[0.2cm]
Hence $D'$ is determined by $\phi_{D'}$. By definition of $\varphi_{D'}$ it is also easy to see that the requirements
$t_j(\phi_{D'}(e_k)) = t_j(\phi_D(e_k))$, for $j \neq i $, and $t_i(\phi_{D'}(e_k)) = 0 $ hold.
\end{proof}

As an immediate consequence we obtain the following result.

\begin{cor}\label{allfixed}
Let $D\in \AID(\Lg)$ be determined by a map $\phi_D$. If each basis vector is fixed, then $D\in \Inn(\Lg)$.
\end{cor}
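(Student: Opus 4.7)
The plan is to apply Lemma \ref{3.5} iteratively, once for each basis vector, and to show that after $n$ steps the derivation $D$ has been modified by a sequence of inner derivations into the zero map; this exhibits $D$ as a single inner derivation.

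More precisely, I would proceed by induction on $i = 0, 1, \ldots, n$, building a sequence $D^{(0)} = D, D^{(1)}, \ldots, D^{(n)}$ of almost inner derivations, each equipped with a determining map $\phi^{(i)}$, such that
\[
D^{(i)} \;=\; D + \ad(\al_1 e_1 + \al_2 e_2 + \cdots + \al_i e_i),
\]
where $\al_j$ is the fixed value of $e_j$ as a fixed basis vector for $D$. The inductive step from $D^{(i-1)}$ to $D^{(i)}$ uses Lemma \ref{3.5}: I need $e_i$ to be a fixed basis vector for $D^{(i-1)}$ with value $\al_i$. Granting this, Lemma \ref{3.5} produces a map $\phi^{(i)}$ with $t_i(\phi^{(i)}(e_k)) = 0$ for all $k$, while leaving $t_j(\phi^{(i)}(e_k)) = t_j(\phi^{(i-1)}(e_k))$ for $j \neq i$.

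The main thing to verify, and the only point requiring care, is that the fixed-vector property of $e_i$ is preserved through the previous $i-1$ modifications. This follows directly from the coordinate invariance in Lemma \ref{3.5}: after steps $1, \ldots, i-1$, the $t_i$-coordinates of $\phi^{(i-1)}(e_k)$ are unchanged from those of $\phi_D(e_k)$, and in particular $t_i(\phi^{(i-1)}(e_k)) = \al_i$ whenever $e_k \notin C_{\Lg}(e_i)$. Consequently $e_i$ is fixed for $D^{(i-1)}$ with the same value $\al_i$, so Lemma \ref{3.5} applies.

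After $n$ steps, the map $\phi^{(n)}$ satisfies $t_j(\phi^{(n)}(e_k)) = 0$ for all $j, k$, hence $\phi^{(n)}(e_k) = 0$ and therefore
\[
D^{(n)}(e_k) \;=\; [e_k, \phi^{(n)}(e_k)] \;=\; 0
\]
for every basis vector $e_k$. A derivation vanishing on a basis is zero, so $D^{(n)} = 0$, which yields
\[
D \;=\; -\ad(\al_1 e_1 + \cdots + \al_n e_n) \;=\; \ad\!\bigl(-\textstyle\sum_{i=1}^n \al_i e_i\bigr) \;\in\; \Inn(\Lg),
\]
as required. The argument is almost entirely bookkeeping once Lemma \ref{3.5} is in hand; the substantive content is the preservation-of-fixedness observation above.
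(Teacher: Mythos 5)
Your proof is correct and follows the same route as the paper: iterate Lemma \ref{3.5} over the basis vectors, absorb the adjustments into a single $\ad(v)$, and conclude that the resulting derivation vanishes on the basis. Your explicit check that the coordinate invariance in Lemma \ref{3.5} preserves the fixed-vector property of the remaining $e_i$ is exactly the point the paper leaves implicit in ``iteratively applying,'' so nothing is missing.
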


\begin{proof}
Let $\al_i$ denote the fixed value of $e_i$. Then by iteratively applying Lemma $\ref{3.5}$, we find that
$D +\ad (\al_1 e_1) + \ad (\al_2 e_2) + \cdots + \ad (\al_n e_n) = D + \ad (v)$, with $v=\sum_{i=1}^n\al_i e_i$ 
is an almost inner derivation $D'$, determined by a map
$\phi_{D'}$ with $\phi_{D'}(e_i)=0$ for all $i\in \{1,2,\ldots,n\}$. This implies that $D'(e_i)=0$ for all basis vectors
$e_i$ and hence $D'=0$ or $D= -\ad (v)\in \Inn(\Lg)$.
\end{proof}

The next results are two technical lemmas, providing a way to find fixed basis vectors.
We will use the following notation: Let $i_1,i_2,\ldots, i_r\in \{1,2,\ldots ,n\}$ then
\[ \Lg_{i_1,i_2,\ldots,i_r}={\rm span} \{ e_i \;|\; i\not \in \{i_1,i_2,\ldots,i_r\}\}\]
denotes the vector space spanned by all basis vectors not in the set $\{e_{i_1},e_{i_2},\ldots,e_{i_r}\}$.

\begin{lem}\label{3.7}
Assume that $1\leq i,j,k,l,m\leq n$ and $l\neq m$. Moreover assume that there exist nonzero
scalars $\alpha,\beta \in K$  such that
\[\begin{array}{l}
{[e_j,e_i]- \alpha e_l \in \Lg_{l,m}}\\
{[e_k,e_i]-\beta e_m \in \Lg_{l,m}}\\
{[e_j,\Lg_i]\subseteq \Lg_{l,m}}\\
{[e_k,\Lg_i]\subseteq \Lg_{l,m}}.
\end{array}\]
Then, for any $D\in \AID(\Lg)$ determined by a map $\phi_D$, we have that $t_i(\phi_D(e_j))=t_i(\phi_D(e_k))$.
\end{lem}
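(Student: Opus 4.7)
The plan is to evaluate $D$ on the element $e_j + e_k$ in two ways: by the derivation property via $\phi_D$, and by linearity as $D(e_j)+D(e_k)$. The hypotheses are engineered so that the $l$-coordinate of $D(e_j+e_k)$ detects $t_i(\phi_D(e_j))$, while the $m$-coordinate detects $t_i(\phi_D(e_k))$; comparing with the formula from $\phi_D(e_j+e_k)$ will force both to equal $t_i(\phi_D(e_j+e_k))$, giving the claim.

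First I would write $\phi_D(e_j) = \sum_s a_s e_s$, $\phi_D(e_k) = \sum_s b_s e_s$ and $\phi_D(e_j+e_k) = \sum_s c_s e_s$, and split each sum as $a_i e_i + y_j$ with $y_j \in \Lg_i$, and analogously for the other two. Then
\[
D(e_j) = [e_j,\phi_D(e_j)] = a_i [e_j,e_i] + [e_j,y_j].
\]
By the first and third hypotheses, $a_i[e_j,e_i] = a_i \alpha e_l \pmod{\Lg_{l,m}}$ and $[e_j,y_j] \in \Lg_{l,m}$, so the $l$-coordinate of $D(e_j)$ is $\alpha \, t_i(\phi_D(e_j))$ and the $m$-coordinate is $0$. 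Symmetrically, using the second and fourth hypotheses, the $m$-coordinate of $D(e_k)$ is $\beta \, t_i(\phi_D(e_k))$ and its $l$-coordinate is $0$.

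Next, compute $D(e_j+e_k)$ using $\phi_D$ directly:
\[
D(e_j+e_k) = c_i\bigl([e_j,e_i]+[e_k,e_i]\bigr) + [e_j,y]+[e_k,y],
\]
where $y \in \Lg_i$ is the non-$e_i$ part of $\phi_D(e_j+e_k)$. The four hypotheses give that the right-hand side equals $c_i\alpha e_l + c_i\beta e_m$ modulo $\Lg_{l,m}$. On the other hand, by linearity $D(e_j+e_k) = D(e_j)+D(e_k)$, whose $l$- and $m$-coordinates we computed above to be $\alpha\, t_i(\phi_D(e_j))$ and $\beta\, t_i(\phi_D(e_k))$ respectively.

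Matching the two expressions coordinate by coordinate yields $c_i \alpha = \alpha\, t_i(\phi_D(e_j))$ and $c_i \beta = \beta\, t_i(\phi_D(e_k))$, and since $\alpha,\beta \neq 0$ we obtain $t_i(\phi_D(e_j)) = c_i = t_i(\phi_D(e_k))$, as required. No real obstacle here beyond careful bookkeeping; the only subtlety is to separate the $e_i$-part of each $\phi_D$-value so that the hypotheses $[e_j,\Lg_i],[e_k,\Lg_i]\subseteq \Lg_{l,m}$ can be applied to absorb all unwanted contributions.
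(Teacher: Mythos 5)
Your proof is correct and follows essentially the same route as the paper: decompose $\phi_D(e_j)$, $\phi_D(e_k)$, $\phi_D(e_j+e_k)$ into their $e_i$-components plus a remainder in $\Lg_i$, compute $D(e_j+e_k)$ both via $\phi_D$ and via linearity, and compare the $l$-th and $m$-th coordinates to conclude $t_i(\phi_D(e_j))=t_i(\phi_D(e_k))$ using $\alpha,\beta\neq 0$. Only the notation differs.
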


\begin{proof}
Let $a = t_i(\phi_D(e_j))$, $b=t_i(\phi_D(e_k))$ and $c=t_i(\phi_D(e_j+e_k))$. Then there exist vectors $v,v',v''\in \Lg_i$ 
such that
\begin{align*}
\phi_D(e_j) & = a e_i + v, \\
\phi_D(e_k) & = b e_i+ v',\\
\phi_D(e_j+e_k) & = c e_i + v''.
\end{align*}
Using these notations we find that
\begin{equation}\label{expres1}
D(e_j+e_k)= [e_j+e_k, c e_i + v''] = c \al e_l + c \be e_m +w''
\end{equation}
for some $w''\in \Lg_{l,m}$, and on the other hand we have that
\begin{equation}\label{expres2}
D(e_j)+D(e_k)=[e_j,a e_i+v]+[e_k,b e_i+v']= a \al e_l + w + b \be e_m + w'  
\end{equation}
for some $w,w'\in \Lg_{l,m}$. Now, as $D$ is a linear map, the two expressions \eqref{expres1} and \eqref{expres2} 
must be equal, and so by comparing the $l$-th and $m$-th coordinate, we find that
\[ c \alpha = a \alpha,\; c\beta = b \beta.\]
As both $\alpha$ and $\beta$ are nonzero this implies that $a=b$ and hence
\[  t_i(\phi_D(e_j)) = t_i(\phi_D(e_k)).\]
\end{proof}

\begin{lem}\label{3.8}
Assume that $1\leq i,j,k,l\leq n$. Moreover assume that there exist nonzero
scalars $\alpha,\beta \in K$  such that
\[\begin{array}{l}
{[e_j,e_i]- \alpha e_l \in \Lg_{l}}\\
{[e_k,e_i]-\beta e_l \in \Lg_{l}}\\
{[e_j,\Lg_i]\subseteq \Lg_{l}}\\
{[e_k,\Lg_i]\subseteq \Lg_{l}}.
\end{array}\]
Then, for any $D\in \AID(\Lg)$ determined by a map $\phi_D$, we have that $t_i(\phi_D(e_j))=t_i(\varphi_D(e_k))$.
\end{lem}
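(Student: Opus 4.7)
The plan is to adapt the proof of Lemma \ref{3.7} to the case $l=m$. There the trick was to compare the $e_l$-coordinate and the $e_m$-coordinate of $D(e_j+e_k)$ separately, giving two equations $c\alpha=a\alpha$ and $c\beta=b\beta$. When $l=m$, comparing a single coordinate only yields one equation in the two unknowns $a,b,c$, so we cannot eliminate $c$ directly. The idea is instead to replace $e_j+e_k$ by a carefully chosen linear combination whose bracket with $e_i$ has no $e_l$-component, so that $c$ drops out automatically.

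More precisely, I would set $a=t_i(\phi_D(e_j))$, $b=t_i(\phi_D(e_k))$, and now $c=t_i(\phi_D(\beta e_j-\alpha e_k))$. Writing
\[
\phi_D(e_j)=ae_i+v,\quad \phi_D(e_k)=be_i+v',\quad \phi_D(\beta e_j-\alpha e_k)=ce_i+v''
\]
with $v,v',v''\in\Lg_i$, the derivation $D$ satisfies $D(e_j)=[e_j,ae_i+v]$, $D(e_k)=[e_k,be_i+v']$ and $D(\beta e_j-\alpha e_k)=[\beta e_j-\alpha e_k,\,ce_i+v'']$.

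Now I would compute the $e_l$-coordinate of $D(\beta e_j-\alpha e_k)$ in two ways. On the one hand, using the expression with $c$ and the hypotheses $[e_j,e_i]-\alpha e_l\in\Lg_l$, $[e_k,e_i]-\beta e_l\in\Lg_l$, $[e_j,\Lg_i],[e_k,\Lg_i]\subseteq\Lg_l$, the $e_l$-contribution is exactly $c\beta\alpha-c\alpha\beta=0$, modulo $\Lg_l$. On the other hand, using linearity $D(\beta e_j-\alpha e_k)=\beta D(e_j)-\alpha D(e_k)$, the same computation yields $\beta a\alpha-\alpha b\beta=\alpha\beta(a-b)$, modulo $\Lg_l$. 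Equating these two expressions in $\Lg/\Lg_l$ (i.e.\ comparing $e_l$-coordinates) gives $\alpha\beta(a-b)=0$, and since $\alpha,\beta\neq 0$ we conclude $a=b$.

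The main obstacle is purely in finding the right linear combination; once $\beta e_j-\alpha e_k$ is identified as the combination that kills the $e_l$-part of $[\,\cdot\,,e_i]$, the rest of the argument is essentially verbatim that of Lemma \ref{3.7}. The nonlinearity of $\phi_D$ is not an issue because we only evaluate $\phi_D$ at the three specific vectors $e_j$, $e_k$, $\beta e_j-\alpha e_k$ and use linearity of $D$ itself to identify the two expressions for $D(\beta e_j-\alpha e_k)$.
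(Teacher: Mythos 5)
Your proposal is correct and is essentially the paper's own proof: the same choice of the combination $\beta e_j-\alpha e_k$, the same decompositions $\phi_D(e_j)=ae_i+v$, $\phi_D(e_k)=be_i+v'$, $\phi_D(\beta e_j-\alpha e_k)=ce_i+v''$, and the same comparison of the $e_l$-coordinate yielding $\alpha\beta(a-b)=0$.
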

\begin{proof}
Let $a=t_i(\phi_D(e_j))$, $b=t_i(\phi_D(e_k))$ and $c=t_i(\phi_D(\beta e_j -\alpha e_k))$.
Let $v,v',v''\in \Lg_i$ be such that
\begin{align*}
\phi_D(e_j) & = a e_i+ v, \\
\phi_D(e_k) & =b e_i+ v' \\
\phi_D(\beta e_j -\alpha e_k) & = c e_i + v''.
\end{align*}
Then we have that
\begin{equation}\label{expresa}
D(\beta e_j -\alpha e_k)=
[\beta e_j - \alpha e_k, c e_i + v'']= \beta c \alpha e_l -\alpha c \beta e_l +w'' 
\end{equation}
for some $w''\in \Lg_{l}$. On the other hand we have that
\begin{equation}\label{expresb}
\beta D(e_j) -\alpha D(e_k) = 
\beta [e_j, a e_i + v] - \alpha [e_k, b e_i + v']= \beta a \alpha e_l + w -\alpha b \beta e_l + w'
\end{equation}
for some $w,w'\in \Lg_l$. By comparing the $l$-th coordinate of \eqref{expresa} and \eqref{expresb} we find that
\[ \alpha \beta (a-b)=0 \Rightarrow a=b.\]
\end{proof}

\section{2-step nilpotent Lie algebras determined by graphs}

Let $G(V,E)$ be a finite simple graph with $V=\{x_1,x_2,\ldots,x_r\}$ its set of vertices and $E$ its set of edges. 
If there is an edge between vertex $x_i$ and $x_j$ with $i<j$, we denote this edge by the symbol $y_{i,j}$.
We let $X$ be the vector space over the field $K$ with basis the elements of $V$ and $Y$ be the vector space with basis the 
edges $y_{i,j}$. We define a two-step nilpotent Lie algebra $\Lg$ over $K$, where as a vector space $\Lg=X\oplus Y$ and 
where the brackets are given by
\begin{align*}  
[x_i,x_j] & =\begin{cases} y_{i,j}, \mbox{ if $y_{i,j} \in E$}\\
0, \mbox{ if there is no edge connecting $x_i$ with $x_j$}
\end{cases} \\[0.1cm]
[x_i, y_{j,k}]& = 0 \; \; \forall x_i \in V,\; \forall y_{j,k}\in E \\[0.1cm]
 [y_{i,j},y_{k,l}] & = 0 \; \; \forall y_{i,j},y_{k,l}\in E
\end{align*}

\begin{thm}
Let $\Lg$ be a 2-step nilpotent Lie algebra determined by a finite simple graph. Then $\AID(\Lg)=\Inn(\Lg)$.
\end{thm}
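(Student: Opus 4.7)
The plan is to invoke Corollary~\ref{allfixed} by showing that, with respect to the chosen basis $\{x_1,\ldots,x_r\}\cup \{y_{i,j}:y_{i,j}\in E\}$, every basis vector is fixed for every almost inner derivation $D$ of $\Lg$. Since the almost inner derivations automatically map $\Lg$ into $[\Lg,\Lg]=Y$, and since $Y$ is central in the $2$-step nilpotent algebra $\Lg$, every edge vector $y_{i,j}$ satisfies $C_{\Lg}(y_{i,j})=\Lg$, so Lemma~\ref{3.3} immediately shows that each $y_{i,j}$ is a (vacuously) fixed basis vector.

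It remains to treat the vertex vectors. For a fixed $x_i\in V$, the centralizer $C_\Lg(x_i)$ contains all edge vectors and all $x_s$ with $\{x_i,x_s\}\notin E$; the basis vectors outside $C_\Lg(x_i)$ are therefore precisely the neighbors of $x_i$ in the graph. If $x_i$ has at most one neighbor, Lemma~\ref{3.3} applies and $x_i$ is fixed. Otherwise, pick any two distinct neighbors $x_j,x_k$ of $x_i$. I would apply Lemma~\ref{3.7} with the roles of $e_l,e_m$ played by the two distinct edge vectors $y_{i,j}$ and $y_{i,k}$: by construction $[x_j,x_i]=\pm y_{i,j}$ and $[x_k,x_i]=\pm y_{i,k}$, giving nonzero scalars $\alpha,\beta$, while $[x_j,\Lg_i]$ is spanned by $\{y_{j,s}:s\neq i,\; \{x_j,x_s\}\in E\}$, none of which can equal $y_{i,j}$ (that would force $s=i$) or $y_{i,k}$ (that would force $\{j,s\}=\{i,k\}$, contradicting $j\neq i$ and $j\neq k$). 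The analogous containment holds for $x_k$. Lemma~\ref{3.7} then yields $t_i(\phi_D(x_j))=t_i(\phi_D(x_k))$, so this common value is a well-defined fixed scalar $\alpha_i$ for $x_i$.

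Once every basis vector is known to be fixed, Corollary~\ref{allfixed} gives $D\in \Inn(\Lg)$, and since $D\in \AID(\Lg)$ was arbitrary we conclude $\AID(\Lg)=\Inn(\Lg)$.

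The routine but essential step to verify carefully is the containment $[x_j,\Lg_i]\subseteq \Lg_{l,m}$ in the hypotheses of Lemma~\ref{3.7}; this is precisely where the hypothesis that the graph is \emph{simple} (no loops, no multiple edges) and the choice of basis coming from the graph are used. Once that bookkeeping is done the theorem follows without any appeal to the full derivation algebra of $\Lg$.
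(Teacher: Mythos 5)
Your proposal is correct and follows essentially the same route as the paper: edge vectors are fixed because they are central, vertex vectors are fixed by applying Lemma~\ref{3.7} with the two distinct edge basis vectors $y_{i,j}$ and $y_{i,k}$ (distinctness being exactly where simplicity of the graph enters), and Corollary~\ref{allfixed} then finishes the argument. Your explicit verification of the containment $[x_j,\Lg_i]\subseteq\Lg_{l,m}$ is a welcome bit of bookkeeping that the paper only asserts.
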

\begin{proof}
Let $s=\#E$ and choose an order $p_1, p_2, \ldots p_s$ for the edges. So any $p_t$ corresponds to a unique edge 
$y_{i,j}$. Now, we fix the basis $\{e_1,e_2,\ldots ,e_{r+s}\}$ of $\Lg$ given by 
\[ e_1=x_1,\ldots, e_r=x_r,e_{r+1}=p_1, e_{r+2}=p_2, \ldots, e_{r+s}=p_s.\]
Let $D\in \AID(\Lg)$ be determined by the map $\varphi_D$. We want to apply Corollary $\ref{allfixed}$, and hence 
we want to show that any basis vector is fixed for $D$. For $e_{r+1}, e_{r+2}, \ldots, e_{r+s}$ this is obvious,
since these vectors belong to $Z(\Lg)$. \\
Now, consider $e_i$ with $1\leq i \leq r$. If $e_i\in Z(\Lg)$, i.e.,  when $x_i$ is an isolated vertex, 
there is again nothing to show.
So assume that $e_i\not \in Z(\Lg)$. Then there is at least one $e_j\not \in C_\Lg(e_i)$ (with $1 \leq j \leq n$).
Hence $[e_j,e_i] = \pm e_l $ for some $l$ between $r+1$ and $r+s$. Let $\alpha= t_i(\varphi_D(e_j))$. Consider any other 
basis vector $e_k\not \in  C_\Lg(e_i)$. In order to show that $e_i$ is fixed, we must show that also 
$t_i(\varphi_D(e_k))=\alpha$. There exists an $m\in \{r+1,\ldots ,r+s\}$ with $[e_k,e_i]=\pm e_m$. As 
$\Lg$ is determined by a graph we have that $m\neq l$.

We are in the following situation
\[\begin{array}{l}
{[e_j,e_i] \pm  e_l =0}\\
{[e_k,e_i] \pm  e_m =0} \\
{[e_j,\Lg_i]\subseteq \Lg_{l,m}}\\
{[e_k,\Lg_i]\subseteq \Lg_{l,m}}.
\end{array}\]
This means that we can apply Lemma $\ref{3.7}$ and we find that $t_i(\varphi_D(e_k))=t_i(\varphi_D(e_j))=\alpha$.
Hence $e_i$ is indeed fixed for all $i$ and this finishes the proof.
\end{proof}

\begin{cor}\label{free2step}
Let $\Lf_{r,2}$ be the free 2-step nilpotent Lie algebra on $r$ generators, then 
\[ \AID(\Lf_{r,2})=\Inn(\Lf_{r,2}).\]
\end{cor}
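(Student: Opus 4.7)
The plan is to recognize $\Lf_{r,2}$ as the 2-step nilpotent Lie algebra determined by a specific graph and then invoke the preceding theorem. Specifically, I would identify $\Lf_{r,2}$ with the Lie algebra attached to the complete graph $K_r$ on $r$ vertices $x_1,\ldots,x_r$.

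First, I would recall the presentation of the free 2-step nilpotent Lie algebra on $r$ generators: it has a basis consisting of the generators $x_1,\ldots,x_r$ together with the independent brackets $[x_i,x_j]$ for $1\le i<j\le r$, and every bracket $[x_i,[x_j,x_k]]$ vanishes by 2-step nilpotency. On the graph side, the Lie algebra $\Lg(K_r)$ associated with the complete graph $K_r$ has, by construction, basis $\{x_1,\ldots,x_r\}\cup\{y_{i,j}\mid 1\le i<j\le r\}$ with brackets $[x_i,x_j]=y_{i,j}$ for $i<j$ (since every pair is an edge of $K_r$) and all other brackets zero.

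Next I would write down the obvious linear map $\Lf_{r,2}\to\Lg(K_r)$ sending generators to generators and $[x_i,x_j]\mapsto y_{i,j}$, and verify that it is a Lie algebra isomorphism. The verification is routine: both algebras are 2-step nilpotent with the same generators, the same defining brackets among generators, and matching dimensions $r+\binom{r}{2}$, so the map is a bijective Lie homomorphism.

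Finally, since almost inner and inner derivations are invariants of a Lie algebra up to isomorphism, Theorem 4.1 applied to $\Lg(K_r)$ yields $\AID(\Lg(K_r))=\Inn(\Lg(K_r))$, and transporting via the isomorphism gives $\AID(\Lf_{r,2})=\Inn(\Lf_{r,2})$. There is essentially no obstacle here; the only thing to be careful about is matching the two presentations correctly, and in particular noting that the completeness of $K_r$ corresponds to the fact that no nontrivial linear relations among the brackets $[x_i,x_j]$ are imposed in the free algebra.
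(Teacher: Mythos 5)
Your proposal is correct and follows exactly the paper's own argument: the paper likewise deduces the corollary by observing that $\Lf_{r,2}$ is the 2-step nilpotent Lie algebra determined by the complete graph on $r$ vertices and then applying the preceding theorem. Your additional verification of the isomorphism is a harmless elaboration of what the paper treats as immediate.
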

\begin{proof}
This follows immediately from the fact that $\Lf_{r,2}$ is the 2-step nilpotent Lie algebra determined by the 
complete graph on $r$ vertices.
\end{proof}

\section{Free 3-step nilpotent Lie algebras}

Let $\Lf_{r,3}$ be the free 3-step nilpotent Lie algebra on $r$ generators $e_1,e_2,\ldots, e_r$. Having fixed 
these generators, we can find a Hall basis of $\Lf_{r,3}$, which is a basis of $\Lf_{r,3}$ as a vector space and 
which is explicitly given by the following collection of vectors:
\begin{align*}\label{Hall}
e_i &  \mbox{ for } 1\leq i \leq r \\[0.1cm]
y_{i,j} & = [e_i,e_j] \mbox{ for } 1\leq i < j \leq r \\[0.1cm]
z_{i,j,k} & = [e_i,y_{j,k}]\mbox{ for } 1 \leq j < k \leq r\mbox{ and } 1 \leq i \leq k.
\end{align*}
Note that if $i>k$ then 
\begin{align*}
[e_i,y_{j,k}]  & = [e_i,[e_j,e_k]] \\
 & =-[e_j,[e_k,e_i]] - [e_k,[e_i,e_j]] \\
 & = -z_{j,k,i}+z_{k,j,i}.
\end{align*}

\begin{lem} \label{zerointersect}
Let $x,y\in \Lf_{r,3}$. If $x- y\not\in [\Lf_{r,3},\Lf_{r,3}]$, then 
\[ [x,[\Lf_{r,3},\Lf_{r,3}]] \cap [y, [\Lf_{r,3},\Lf_{r,3}]] = 0.\]
\end{lem}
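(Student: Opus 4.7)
The plan rests on one structural fact about $\Lf := \Lf_{r,3}$: the derived subalgebra $[\Lf,\Lf]$ is abelian. Indeed, for any $a,b,c,d \in \Lf$, the Jacobi identity yields
\[
[[a,b],[c,d]] = [a,[b,[c,d]]] - [b,[a,[c,d]]] \in [\Lf,[\Lf,[\Lf,\Lf]]] = 0,
\]
by 3-step nilpotency. In particular $[v,[\Lf,\Lf]] = 0$ whenever $v \in [\Lf,\Lf]$, and the subspace $[x,[\Lf,\Lf]] \subseteq [\Lf,[\Lf,\Lf]]$ depends only on the class $\bar{x}$ of $x$ in the abelianization $\Lf/[\Lf,\Lf] \cong K^r$.

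The hypothesis $x-y \notin [\Lf,\Lf]$ is equivalent to $\bar{x} \neq \bar{y}$ in $K^r$, and I would dispose first of the degenerate cases. If $\bar{y} = 0$ then $y \in [\Lf,\Lf]$, so the observation above gives $[y,[\Lf,\Lf]] = 0$ and the intersection is trivially zero; the case $\bar{x} = 0$ is symmetric. So it remains to handle the case in which $\bar{x}$ and $\bar{y}$ are both nonzero but unequal in $K^r$.

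In this remaining case the plan is to use an automorphism of $\Lf_{r,3}$ to put $\bar{x}$ and $\bar{y}$ into coordinate form. By the universal property of the free nilpotent Lie algebra, every $A \in \operatorname{GL}_r(K)$ acting on the generators lifts to an element of $\Aut(\Lf_{r,3})$; this gives enough flexibility to normalize $\bar{x} = \bar{e}_1$ and $\bar{y} = \bar{e}_2$ whenever $\bar{x}$ and $\bar{y}$ are linearly independent. Since $[-,[\Lf,\Lf]]$ only sees the class modulo $[\Lf,\Lf]$, the problem reduces to verifying $[e_1,[\Lf,\Lf]] \cap [e_2,[\Lf,\Lf]] = 0$. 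The formula $[e_i,y_{j,k}] = z_{i,j,k}$ (valid for $i \leq k$, recalled just before the lemma) makes this transparent: $[e_1,[\Lf,\Lf]]$ is spanned by the Hall basis vectors $z_{1,j,k}$, while $[e_2,[\Lf,\Lf]]$ is spanned by the $z_{2,j,k}$, and these are disjoint collections of basis vectors.

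The main obstacle is the reduction step. The $\operatorname{GL}_r(K)$-action handles the case in which $\bar{x}, \bar{y}$ are linearly independent, but the hypothesis $\bar{x} \neq \bar{y}$ also admits the subcase in which $\bar{y}$ is a nonzero scalar multiple of $\bar{x}$. There one cannot normalize to two distinct coordinate vectors, and the Hall-basis inspection must instead be carried out by expanding $[e_i,y_{j,k}]$ using also the alternative formula $[e_i,y_{j,k}] = -z_{j,k,i} + z_{k,j,i}$ for $i > k$; tracking these two kinds of terms simultaneously is the bookkeeping that requires the most care.
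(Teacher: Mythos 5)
Your treatment of the main case is essentially the paper's own argument: observe that $[\Lf_{r,3},\Lf_{r,3}]$ is abelian, so $[x,[\Lf_{r,3},\Lf_{r,3}]]$ depends only on $x$ modulo $[\Lf_{r,3},\Lf_{r,3}]$, reduce (when $\bar{x},\bar{y}$ are linearly independent in the abelianization) to the case $x=e_1$, $y=e_2$ of a free generating set, and read off from the Hall basis that $\langle z_{1,p,q}\rangle \cap \langle z_{2,p,q}\rangle=0$. So far so good.

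The genuine problem is the subcase you defer to ``bookkeeping'': when $\bar{y}=\lambda\bar{x}$ with $\lambda\neq 0,1$, no amount of Hall-basis tracking will close the argument, because the conclusion is false there. Writing $y=\lambda x+c$ with $c\in[\Lf_{r,3},\Lf_{r,3}]$, for every $w\in[\Lf_{r,3},\Lf_{r,3}]$ one has $[y,w]=\lambda[x,w]+[c,w]=\lambda[x,w]$, since $[c,w]\in[[\Lf_{r,3},\Lf_{r,3}],[\Lf_{r,3},\Lf_{r,3}]]\subseteq\gamma_4(\Lf_{r,3})=0$. Hence $[y,[\Lf_{r,3},\Lf_{r,3}]]=[x,[\Lf_{r,3},\Lf_{r,3}]]$ as subspaces, and this common subspace is nonzero whenever $r\geq 2$ and $x\notin[\Lf_{r,3},\Lf_{r,3}]$ (it contains $z_{1,1,2}\neq 0$ after normalizing $\bar{x}=\bar{e}_1$). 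Concretely, $x=e_1$, $y=2e_1$ over any field with more than two elements satisfies the hypothesis $x-y\notin[\Lf_{r,3},\Lf_{r,3}]$ but violates the conclusion. You should be aware that the paper's own proof makes the same silent jump: it asserts that $x-y\notin[\Lf_{r,3},\Lf_{r,3}]$ allows one to take $e_1=x$, $e_2=y$ as part of a free generating set, which requires $\bar{x},\bar{y}$ to be linearly independent, not merely distinct. The lemma should be stated with the hypothesis that $x$ and $y$ are linearly independent modulo $[\Lf_{r,3},\Lf_{r,3}]$; that is the only situation in which it is invoked (with $x=e_1$ and $y=e_i$, $i\geq 2$, distinct free generators), so the theorem it supports is unaffected. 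Your write-up would be correct, and in fact an improvement on the paper, if you replaced the final paragraph by this counterexample and the corrected hypothesis.
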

\begin{proof}
If either $x$ or $y$ belongs to $[\Lf_{r,3},\Lf_{r,3}]$ there is nothing to show. In case both do not belong 
to $[\Lf_{r,3},\Lf_{r,3}]$, the condition that $x- y\not\in [\Lf_{r,3},\Lf_{r,3}]$, actually means that we can choose 
a generating set $e_1=x,\; e_2=y,\; e_3, \ldots, e_r$ such that $\Lf_{r,3}$ is the free 3-step nilpotent Lie algebra
on that set of generators. Using the Hall basis introduced above, we see that 
\begin{align*} 
{[x, [\Lf_{r,3},\Lf_{r,3}]]} & = \langle z_{1,p,q}\;|\; 1 \leq p < q \leq r \rangle,  \\
{[y, [\Lf_{r,3},\Lf_{r,3}]]} & = \langle z_{2,p,q}\;|\; 1 \leq p < q \leq r \rangle.
\end{align*} 
Note that all of the vectors $z_{1,p,q}$ and $z_{2,p,q}$ belong to the Hall set mentioned above and that the set 
of basis vectors $z_{1,p,q}$ is disjoint of the set of basis vectors $z_{2,p,q}$. So we have that the 
subspaces spanned by those two sets have only the zero vector in common.
\end{proof}

\begin{thm} Let $f_{r,3}$ be the free $3$-step nilpotent Lie algebra on $r$ generators. Then
\[ \AID(\Lf_{r,3})= \Inn(\Lf_{r,3}).\]
\end{thm}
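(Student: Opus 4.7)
My plan is to first reduce to the free $2$-step nilpotent case by quotienting out the center, and then use Lemma~\ref{zerointersect} to finish. Let $D\in\AID(\Lf_{r,3})$ and write $\Lf=\Lf_{r,3}$, with $\Lf^k$ the terms of the lower central series, so $\Lf^3=Z(\Lf)$. Since almost inner derivations kill the center, $D$ descends to a derivation $\bar D$ on $\Lf/\Lf^3\cong\Lf_{r,2}$; this $\bar D$ is visibly almost inner, hence inner by Corollary~\ref{free2step}. Lifting an implementing element to $v\in\Lf$, the modified derivation $D':=D-\ad(v)\in\AID(\Lf)$ satisfies $D'(\Lf)\subseteq\Lf^3$. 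In particular $D'$ vanishes on $\Lf^2$ (by the derivation property) and is determined by its values on the generators $e_1,\dots,e_r$.

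Writing $D'(e_i)=[e_i,u_i]$, the generator part of $u_i$ must lie in $\langle e_i\rangle$ (otherwise $[e_i,u_i]$ would have a nontrivial $\Lf^2/\Lf^3$-component, contradicting $D'(e_i)\in\Lf^3$); absorbing this into $C_\Lf(e_i)$, we may take $u_i\in\Lf^2$. The same argument for $D'(e_i+e_j)=[e_i+e_j,u_{ij}]$ gives $u_{ij}\in\Lf^2$, and expanding $D'(e_i+e_j)=D'(e_i)+D'(e_j)$ yields
\[
[e_i,u_{ij}-u_i]=[e_j,u_j-u_{ij}].
\]
Both sides lie in $[e_i,\Lf^2]\cap[e_j,\Lf^2]=0$ by Lemma~\ref{zerointersect} (using $e_i-e_j\notin\Lf^2$), so both vanish and $u_{ij}-u_i\in C_\Lf(e_i)\cap\Lf^2$.

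To conclude I need $C_\Lf(e_i)\cap\Lf^2=\Lf^3$, i.e.\ that $\ad(e_i)$ is injective on $\Lf^2/\Lf^3$. This I will verify by a direct inspection of the Hall basis: the images $[e_i,y_{j,k}]$ split into those with $k\geq i$, giving the Hall element $z_{i,j,k}$, and those with $k<i$, giving $-z_{j,k,i}+z_{k,j,i}$; these contribute to disjoint families of Hall basis elements and are linearly independent within each family. Granted this, $u_i\equiv u_{ij}\equiv u_j\pmod{\Lf^3}$ for all $i,j$, so there is a single $u^*\in\Lf^2$ with $u_i\equiv u^*\pmod{\Lf^3}$ for every $i$. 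Since $\Lf^3$ is central, $D'(e_i)=[e_i,u^*]=\ad(-u^*)(e_i)$ for every $i$, so $D'=\ad(-u^*)$ as derivations, and therefore $D=\ad(v-u^*)\in\Inn(\Lf)$. The main obstacle I expect is precisely this injectivity of $\ad(e_i)$ on $\Lf^2/\Lf^3$: it is the only place where the free structure of $\Lf_{r,3}$ is used beyond the quotient to $\Lf_{r,2}$, and while not deep, it requires the small Hall-basis case analysis just sketched.
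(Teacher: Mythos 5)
Your proof is correct and follows essentially the same route as the paper's: reduce modulo the center to the free $2$-step case via Corollary~\ref{free2step}, write $D'(e_i)=[e_i,u_i]$ with $u_i\in[\Lf,\Lf]$, use Lemma~\ref{zerointersect} on $D'(e_i+e_j)$ to force $u_i\equiv u_j$ modulo the centralizer, and invoke the fact that the only elements of $[\Lf,\Lf]$ centralizing a generator lie in $Z(\Lf)$. The only differences are cosmetic (you compare all pairs $e_i,e_j$ where the paper compares each $e_i$ to $e_1$) plus your explicit Hall-basis verification of the injectivity of $\ad(e_i)$ on $\Lf^2/\Lf^3$, a point the paper asserts without proof.
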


\begin{proof}
Let $D\in \AID(\Lf_{r,3})$. Note that $D$ induces an almost inner derivation $\bar{D}$ on 
$\Lf_{r,3}/Z(\Lf_{r,3})\cong \Lf_{r,2}$. By Corollary $\ref{free2step}$ we know that $\bar{D}$ is an inner derivation. 
Hence, by adjusting $D$ with an inner derivation, we may assume that $D(\Lf_{r,3})\subseteq Z(\Lf_{r,3})$. \\[0.2cm]
Let $e_1, e_2, \ldots , e_r$ be the generators of $\Lf_{r,3}$. Since we must have that $D(e_i)\in Z(\Lf_{r,3})$, there 
exist vectors $v_i\in [\Lf_{r,3},\Lf_{r,3}]$ such that 
\[ D(e_i)= [e_i, v_i]. \]
Analogously, there are also vectors $w_i \in  [\Lf_{r,3},\Lf_{r,3}]$, for $2\leq i \leq r$,  with 
\[ D(e_1 + e_i) = [e_1 + e_i, w_i].\]
By using the equation $D(e_1+e_i)=D(e_1) + D(e_i)$ we find that 
\[ [e_1,w_i] -[e_1,v_1] = [e_i,v_i] - [e_i,w_i].\]
Now, since the left hand side of the above expression belongs to $[e_1, [\Lf_{r,3},\Lf_{r,3}]]$ and the 
right hand side to $[e_i,[\Lf_{r,3},\Lf_{r,3}]]$,  it follows from Lemma $\ref{zerointersect}$
that both expressions are zero. Hence we have
\[ [e_1, w_i - v_1]= [e_i, w_i- v_i] =0 .\]
Since the only elements of $[\Lf_{r,3},\Lf_{r,3}]$ that commute with $e_1$, respectively with $e_i$, are 
those belonging to the center $Z(\Lf_{r,3})$, we find that 
\[ w_i - v_1\in Z(\Lf_{r,3}),\; w_i -v_i \in Z(\Lf_{r,3}) .\]
So $v_i-v_1\in Z(\Lf_{r,3})$. Therefore we can without any problem replace $v_i$ with $v_1$, and we 
find that $D(e_i)=[e_i,v_1]$. If we now consider the derivation $D'= D+\ad (v_1)$, we see that 
$D'(e_i)=0$. But then $D'$ is a derivation which is zero on the generators, and hence $D'$ is zero everywhere.
It follows that $D=-\ad (v_1)$, which was to be shown. 
\end{proof}

\section{Free metabelian nilpotent Lie algebras on two generators}

In this section we will show that all almost inner derivations are inner for free metabelian nilpotent Lie algebras
of class $c$ on $2$ generators. \\
Let $\Lf_2$ be the free Lie algebra on two generators, say $a$ and $b$. Let $\Lf_2^{(1)}=[\Lf_2,\Lf_2]$, 
$\Lf_2^{(i+1)}=[\Lf_2^{(i)},\Lf_2^{(i)}]$ for $i\ge 1$, and $\ga_1(\Lf_2)=\Lf_2$, $\ga_{i+1}(\Lf_2)=[\Lf_2,\ga_i(\Lf_2)]$  for $i\ge 1$. Then, the free $c$-step nilpotent and metabelian Lie algebra 
$\Lm_{2,c}$ is obtained as a quotient
\[ \Lm_{2,c}= \frac{\Lf_2}{\Lf_2^{(2)}+\gamma_{c+1}(\Lf_2)}.\]
So, it is the largest quotient of $\Lf_2$ which is both metabelian and $c$--step nilpotent.
Let us use $x_1$ and $x_2$ to denote the projection of $a$ and $b$ resp. in $\Lm_{2,c}$. 
We introduce the notation  $y^m_n$ for all $m\ge 2$, $ n\in \{1,\ldots, m-1\}$ by
\[  
y^{m}_{n}=[x_2,\underbrace{x_1,x_1,x_1,\ldots, 
x_1}_{\mbox{\small $m-n$ times }}, \underbrace{x_2,x_2,x_2,\ldots,x_2}_{\mbox{\small $n-1$ times }}],
\] where for all $z_1,z_2,\dots,z_n \in \Lg$, the iterated bracket $[[\dots[[z_1,z_2],z_3],\dots],z_n]$ is denoted with $[z_1,z_2,\dots,z_n]$.
So $y^m_n$ is an $m$-fold Lie bracket with $m-n$ appearances of $x_1$ and $n$ appearances of $x_2$.
It is well known that $x_1, x_2$ together with the elements $y^m_n$ ($1\leq n <m\leq c $) form a basis of $\Lm_{2,c}$ (E.g.~\cite[Section 4.7]{BAH}).
In fact, for any $i>1$  the projections of the elements $y^i_1,y^i_2,\ldots,y^i_{i-1}$ form a basis of 
$\gamma_{i}(\Lm_{2,c})/\gamma_{i+1}(\Lm_{2,c})$. So $\gamma_{i}(\Lm_{2,c})/\gamma_{i+1}(\Lm_{2,c})$ is 
$(i-1)$-dimensional (for $i\leq c$).

\begin{lem}
Let $z_1,z_2,\ldots, z_{n-2}\in \{x_1,x_2\}$ and $k=\#\{ i \in \{1,\ldots ,n-2\} \;|\; z_i=x_2\}+1$. Then
we have 
\[ [x_2,x_1,z_1,z_2,\ldots,z_{n-2}] = y^n_{k}.\]
\end{lem}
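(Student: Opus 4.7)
The plan is to use the metabelian hypothesis to show that a left-normed bracket whose initial entry lies in $[\Lm_{2,c},\Lm_{2,c}]$ is symmetric in its remaining arguments, so that only the multiplicities of $x_1$ and $x_2$ among $z_1,\ldots,z_{n-2}$ matter.

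First I would establish the key auxiliary fact: for any $u\in[\Lm_{2,c},\Lm_{2,c}]$ and $v,w\in\Lm_{2,c}$, the identity $[u,v,w]=[u,w,v]$ holds. This follows from the Jacobi identity in the form $[[u,v],w]=[u,[v,w]]+[[u,w],v]$, together with the observation that $u$ and $[v,w]$ both lie in $[\Lm_{2,c},\Lm_{2,c}]$, so the bracket $[u,[v,w]]$ sits in $[[\Lm_{2,c},\Lm_{2,c}],[\Lm_{2,c},\Lm_{2,c}]]$, which vanishes by metabelianness.

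Next I would apply this observation inductively. Since $[x_2,x_1]\in[\Lm_{2,c},\Lm_{2,c}]$ and $[\Lm_{2,c},\Lm_{2,c}]$ is an ideal, each partial left-normed bracket $[x_2,x_1,z_1,\ldots,z_{i-1}]$ again lies in $[\Lm_{2,c},\Lm_{2,c}]$. Taking this partial bracket as the $u$ in the auxiliary fact lets me swap the adjacent entries $z_i$ and $z_{i+1}$ without changing the value of $[x_2,x_1,z_1,\ldots,z_{n-2}]$. Iterating these adjacent transpositions (i.e., a bubble sort on the sequence $z_1,\ldots,z_{n-2}$) shows that the value of the iterated bracket depends only on the multiset $\{z_1,\ldots,z_{n-2}\}$, and not on the order.

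Finally, writing $k-1$ for the number of $z_i$'s equal to $x_2$, there are $n-k-1$ of them equal to $x_1$. Permuting the $z_i$'s so that the $x_1$'s precede the $x_2$'s gives
\[
[x_2,x_1,z_1,\ldots,z_{n-2}]=\bigl[x_2,x_1,\underbrace{x_1,\ldots,x_1}_{n-k-1},\underbrace{x_2,\ldots,x_2}_{k-1}\bigr]=\bigl[x_2,\underbrace{x_1,\ldots,x_1}_{n-k},\underbrace{x_2,\ldots,x_2}_{k-1}\bigr],
\]
and the right-hand side is exactly $y^n_k$ by definition. There is no genuine obstacle here; the entire argument hinges on the single symmetry provided by the metabelian identity, and the combinatorial bookkeeping at the end is routine.
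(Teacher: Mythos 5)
Your proof is correct and follows essentially the same route as the paper: both rest on the metabelian identity $[c,v,w]=[c,w,v]$ for $c$ in the derived subalgebra (derived from the Jacobi identity plus $[[\Lg,\Lg],[\Lg,\Lg]]=0$), deduce invariance of the left-normed bracket under permutation of $z_1,\ldots,z_{n-2}$, and finish by counting occurrences of $x_2$. You merely spell out the adjacent-transposition argument and the final bookkeeping that the paper leaves as ``the result follows easily.''
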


\begin{proof}
In a metabelian Lie algebra $\Lg$, it follows from the Jacobi identity that 
\[ \forall x,y \in \Lg, \forall c \in \gamma_2(\Lg):\; [c,x,y]=[c,y,x].\]
From this it follows that 
\[ [x_2,x_1,z_1,z_2,\ldots,z_{n-2}]= [x_2,x_1,z_{\sigma(1)},z_{\sigma(2)},\ldots, z_{\sigma(n-2)}]\]
for any permutation $\sigma$ on $n-2$ letters $1,2,\ldots,n-2$. Now, the result follows easily.
\end{proof}

The lemma easily implies the following identities.

\begin{cor} 
We have 
\[ [y^m_n,x_1]=y^{m+1}_n \mbox{ and }[y^m_n,x_2]= y^{m+1}_{n+1}.\]
\end{cor}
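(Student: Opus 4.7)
The plan is to apply the preceding Lemma directly. First, I would rewrite $y^m_n$ in the form covered by that Lemma: by the definition and the left-normed bracket convention,
\[ y^m_n = [x_2, x_1, z_1, z_2, \ldots, z_{m-2}] \]
where exactly $n-1$ of the entries $z_1,\ldots,z_{m-2}$ equal $x_2$ and the remaining $m-n-1$ equal $x_1$. (When $m=2$, there are no $z_i$'s and $y^2_1=[x_2,x_1]$; this edge case must be checked separately but is immediate.)

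Next, for $j\in\{1,2\}$, using once more that the left-normed convention gives $[[w_1,\ldots,w_s],x_j]=[w_1,\ldots,w_s,x_j]$, I would expand
\[ [y^m_n, x_j] = [x_2, x_1, z_1, \ldots, z_{m-2}, x_j], \]
which is exactly the shape to which the preceding Lemma applies, now as an iterated bracket of total length $m+1$. Invoking the Lemma with the appended entry $z_{m-1}:=x_j$, I would read off $[y^m_n, x_j]=y^{m+1}_k$, where
\[ k = \#\{\, i\in\{1,\ldots,m-1\} : z_i=x_2\,\} + 1. \]
For $j=1$ the appended entry contributes no extra $x_2$, so $k=(n-1)+1=n$, giving $[y^m_n,x_1]=y^{m+1}_n$. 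For $j=2$ it contributes one more $x_2$, so $k=n+1$, giving $[y^m_n,x_2]=y^{m+1}_{n+1}$.

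There is no real obstacle here: the Corollary is essentially bookkeeping once the previous Lemma is in hand. The only items worth sanity-checking are the boundary case $m=2,\, n=1$, where $[y^2_1,x_1]=[x_2,x_1,x_1]=y^3_1$ and $[y^2_1,x_2]=[x_2,x_1,x_2]=y^3_2$ as required, and the fact that the range of indices is preserved (if $1\le n\le m-1$ then $1\le n\le m$ and $2\le n+1\le m$, so both $y^{m+1}_n$ and $y^{m+1}_{n+1}$ are legitimate basis elements of $\Lm_{2,c}$ provided $m+1\le c$).
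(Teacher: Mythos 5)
Your argument is exactly the one the paper intends: the paper simply remarks that ``the lemma easily implies the following identities,'' and your proof supplies precisely that bookkeeping, appending $x_j$ to the left-normed bracket and recounting the occurrences of $x_2$ via the preceding Lemma. The computation of $k$ in both cases and the boundary check at $m=2$ are correct, so nothing is missing.
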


Now we can prove the main result of this section.

\begin{prop} 
Let $\Lm_{2,c}$ be the free $c$-step nilpotent and metabelian Lie algebra on 2 generators over an infinite field $K$.
Then $\AID(\Lm_{2,c})= \Inn(\Lm_{2,c})$.
\end{prop}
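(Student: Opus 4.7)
The plan is to take an arbitrary $D\in\AID(\Lm_{2,c})$, normalize it modulo $\Inn(\Lm_{2,c})$ so that $D(x_1)=0$, and then exploit the almost-inner condition at every element $x_1+tx_2$ with $t\in K$ to force $D(x_2)$ into the image of $\ad(x_1)$. The hypothesis that $K$ is infinite enters exactly at one polynomial-vanishing step.

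First, since $D(x_1)\in[x_1,\Lm_{2,c}]$, write $D(x_1)=[x_1,v]$ and replace $D$ by $D+\ad(v)$, still almost inner and now killing $x_1$. Now $D(x_2)\in[\Lm_{2,c},\Lm_{2,c}]$ by Proposition 2.11(1), so I expand $D(x_2)=\sum d_{m,n}y^m_n$ in the Hall basis. From $[x_2,x_1]=y^2_1$ and $[x_2,y^m_n]=-y^{m+1}_{n+1}$, the subspace $[x_2,\Lm_{2,c}]$ is spanned by $y^2_1$ together with the $y^m_n$ for which $m\ge 3$ and $n\ge 2$. The condition $D(x_2)\in[x_2,\Lm_{2,c}]$ therefore already forces $d_{m,1}=0$ for every $m\ge 3$.

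The main step is to use $D(x_1+tx_2)=tD(x_2)\in[x_1+tx_2,\Lm_{2,c}]$. Writing $tD(x_2)=[x_1+tx_2,u_t]$ for some $u_t=\alpha_tx_1+\beta_tx_2+\sum e^{(t)}_{m,n}y^m_n$, expanding with $[x_1,y^m_n]=-y^{m+1}_n$ and $[x_2,y^m_n]=-y^{m+1}_{n+1}$, and comparing the coefficient of $y^{m'}_{n'}$ for $m'\ge 3$ gives the triangular system
\[ td_{m',n'}=-e^{(t)}_{m'-1,n'}-te^{(t)}_{m'-1,n'-1}, \]
with the convention that out-of-range symbols are zero. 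The case $n'=1$ together with $d_{m',1}=0$ yields $e^{(t)}_{m'-1,1}=0$; iterating the recursion then expresses each $e^{(t)}_{m'-1,k}$ as a polynomial in $t$ of positive degree whose coefficients are the scalars $d_{m',j}$ for $2\le j\le k$. The other boundary condition $e^{(t)}_{m'-1,m'-1}=0$ (coming from the case $n'=m'-1$) becomes an identity equating the constant $d_{m',m'-1}$ with a polynomial in $t$ of strictly positive degree in the remaining $d_{m',j}$, valid for infinitely many $t\in K$. Since $K$ is infinite, every coefficient of that polynomial must vanish, forcing $d_{m',j}=0$ for all $m'\ge 3$ and all admissible $j$.

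Hence $D(x_2)=d_{2,1}y^2_1=d_{2,1}[x_2,x_1]=\ad(-d_{2,1}x_1)(x_2)$, while $D(x_1)=0=\ad(-d_{2,1}x_1)(x_1)$. Because a derivation of $\Lm_{2,c}$ is determined by its values on the generators $x_1,x_2$, this yields $D=\ad(-d_{2,1}x_1)\in\Inn(\Lm_{2,c})$. The main obstacle I anticipate is bookkeeping the triangular recursion and its two boundary conditions cleanly enough that the polynomial identity in $t$ becomes manifest; once the recursion is solved explicitly, the infinite-field hypothesis immediately delivers the conclusion.
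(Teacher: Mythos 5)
Your proof is correct, but it is organized differently from the paper's. The paper argues by induction on the nilpotency class $c$: it passes to the quotient $\Lm_{2,c}/\gamma_c(\Lm_{2,c})\cong\Lm_{2,c-1}$, uses the induction hypothesis to reduce to the case $D(\Lm_{2,c})\subseteq\gamma_c(\Lm_{2,c})$, and then only has to control the top-degree coefficients $\beta_2,\dots,\beta_{c-1}$ of $D(x_2)$; the resulting linear system in the coefficients of $v_\lambda$ is resolved by taking an alternating sum weighted by powers of $\lambda$. You avoid the induction entirely by expanding $D(x_2)$ in the full Hall basis and observing that, because $\ad(x_1)$ and $\ad(x_2)$ raise the weight $m$ by exactly one, the coefficient comparison in $[x_1+tx_2,u_t]$ decouples into independent triangular systems, one for each weight $m'\ge 3$ --- each of which is precisely the paper's top-degree system, solved by forward substitution rather than by the alternating sum. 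The engine is the same in both arguments: testing almost-innerness along the one-parameter family $x_1+tx_2$ (resp.\ $\lambda x_1+x_2$) produces a polynomial identity in the parameter whose coefficients must all vanish because $K$ is infinite. What your version buys is self-containedness (no appeal to the inductive quotient or to the base cases beyond triviality, since for each $m'$ the boundary equation at $n'=1$ is supplied directly by $d_{m',1}=0$, which you correctly extract from $D(x_2)\in[x_2,\Lm_{2,c}]$); what it costs is the heavier bookkeeping of the two boundary conditions of the recursion, which you have handled correctly --- the constraint at $n'=m'-1$ is $td_{m',m'-1}+te^{(t)}_{m'-1,m'-2}=0$, a polynomial in $t$ with no constant term whose coefficients are $d_{m',m'-1}$ and $\pm d_{m',j}$ for $2\le j\le m'-2$, so all of them vanish. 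The final identification $D(x_2)=d_{2,1}[x_2,x_1]=\ad(-d_{2,1}x_1)(x_2)$ and the appeal to the fact that a derivation is determined by its values on generators are both sound.
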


\begin{proof}
For $c=1$ we have that $\Lm_{2,c}$ is abelian and for $c=2$ we have that $\Lm_{2,c}$ is the Heisenberg 
Lie algebra. As these Lie algebras have no non-trivial almost inner derivations the proposition is 
valid in this situation.

For general $c\geq 3$, we proceed by induction and so we assume the proposition holds up to $c-1$.
Let $D$ be an almost inner derivation of $\Lm_{2,c}$. The space $I=\langle y^c_1, y^c_2,\ldots, y^c_{c-1} \rangle= 
\gamma_c(\Lm_{2,c})=Z(\Lm_{2,c})$ is an ideal of $\Lm_{2,c}$ and hence $D$ induces an almost inner 
derivation $\bar{D}$ on \[ \Lm_{2,c}/I\cong \Lm_{2,c-1}.\]
By the induction hypothesis, $\bar{D}$ is an inner derivation of $\Lm_{2,c-1}$. This means that we
can alter $D$ by an inner derivation of $\Lm_{2,c}$ and assume that 
\[D(\Lm_{2,c}) \subseteq I=\langle y^c_1, y^c_2,\ldots, y^c_{c-1} \rangle =\gamma_c(\Lm_{2,c}).\]
Moreover, by the fact that $D\in \AID(\Lm_{2,c})$ we must have that $D(x)\in [x, \Lm_{2,c}]$ and hence 
\[
D(x_1) \in \langle y^c_1, y^c_2, \ldots ,y^c_{c-2}\rangle \mbox{ and }
D(x_2) \in \langle y^c_2, y^c_3, \ldots ,y^c_{c-1}\rangle.
\]
So there are parameters $\alpha_1,\alpha_2, \ldots, \alpha_{c-2}, \beta_2, \beta_3, \ldots, \beta_{c-1} \in \C$ such that 
\[ 
D(x_1) = \alpha_1 y_1^c + \alpha_2 y_2^c + \cdots + \alpha_{c-2} y_{c-2}^c \mbox{ and }
D(x_2) = \beta_2 y_2^c + \beta_3 y_3^c + \cdots + \beta_{c-1} y_{c-1}^c.
\]

By changing $D$ to $D - \ad_{\alpha_1 y_1^{c-1} + \alpha_2 y_2^{c-1} + \cdots + \alpha_{c-2} y_{c-2}^{c-1}}$, 
we may assume that all parameters $\alpha_i=0$ ($1\leq i \leq c-2$) and we are in the situation with 
\[ 
D(x_1) = 0 \mbox{ and }
D(x_2) = \beta_2 y_2^c + \beta_3 y_3^c + \cdots + \beta_{c-1} y_{c-1}^c.
\]
Now let $\lambda\in K$. Then on the one hand we have that 
\begin{equation}\label{beta-cond}
D(\lambda x_1 +x_2) = \lambda D(x_1) +D(x_2) =  \beta_2 y_2^c + \beta_3 y_3^c + \cdots + \beta_{c-1} y_{c-1}^c.
\end{equation}
On the other hand, we also know that there exist an element $v_\lambda \in \Lm_{2,c}$ with 
\[  D(\lambda x_1 +x_2) = [ v_\lambda, \lambda x_1 +x_2].\]
Let 
\[ v_\lambda= a_1 x_1 + a_2 x_2 + \sum_{1\leq n <m\leq c} a_{m,n} y^m_n\]
then 
\begin{equation}\label{amn-cond}
 [ v_\lambda, \lambda x_1 +x_2] = 
 (a_2 \lambda -a_1) y_1^2 + 
 \sum_{1\leq n <m\leq c-1}\lambda  a_{m,n} y^{m+1}_n +  \sum_{1\leq n <m\leq c-1} a_{m,n} y^{m+1}_{n+1}
\end{equation}
Comparing the coefficients of the basis vectors $y^c_i$ of  \eqref{beta-cond} with \eqref{amn-cond} we get the following 
system of equations:
\[\left\{ \begin{array}{l}
\lambda a_{c-1,1} = 0\\
\lambda a_{c-1,2} + a_{c-1,1} = \beta_2 \\
\lambda a_{c-1,3} + a_{c-1,2} = \beta_3\\
\hspace*{1cm} \vdots\\
\lambda a_{c-1,c-2} + a_{c-1,c-3} = \beta_{c-2}\\
a_{c-1, c-2} = \beta_{c-1}.
\end{array} \right.
\]
This leads to \[\left\{ \begin{array}{l}
\lambda a_{c-1,1} = 0\\
\lambda^2 a_{c-1,2} + \lambda a_{c-1,1} = \lambda \beta_2 \\
\lambda^3  a_{c-1,3} + \lambda^2 a_{c-1,2} =\lambda^2  \beta_3\\
\hspace*{1cm} \vdots\\
\lambda^{c-2} a_{c-1,c-2} + \lambda^{c-3} a_{c-1,c-3} = \lambda^{c-3} \beta_{c-2}\\
\lambda^{c-2} a_{c-1, c-2} =\lambda^{c-2} \beta_{c-1}.
\end{array} \right.
\]
By taking the alternating sum of all these equations, we find that 
\[ \lambda \beta_2 - \lambda^2 \beta_3 + \cdots + (-1)^{c-2} \lambda^{c-3} \beta_{c-2} + (-1)^{c-1} \lambda^{c-2} \beta_{c-1}=0.\] 
Since the above equation has to hold for all possible $\lambda$ and $K$ is infinite, we must have that 
\[ \beta_2 = \beta_3 = \cdots = \beta_{c-1}=0.\]
It follows that $D(x_2)=0$. Together with the fact that $D(x_1)=0$ this implies that $D=0$, which means that 
the original $D$ we started with was an inner derivation.
\end{proof}

\section{Almost abelian Lie algebras and filiform nilpotent Lie algebras}

From now on we restrict ourselves to the case $K=\C$.
Almost abelian Lie algebras have no unique definition in the literature. A common convention is that
a Lie algebra $\Lg$ is {\em almost abelian} if it contains a $1$-codimensional abelian ideal. It is enough, however, to
require that $\Lg$ contains a $1$-codimensional abelian {\em subalgebra}, see \cite{BU39}.
Here we consider almost inner derivations of complex almost abelian Lie algebras. We may write
$\Lg=\C^n\rtimes \C$ with $\C=\langle e_{n+1}\rangle$, and a basis   
$e_1,e_2, \ldots, e_n$ of $\C^n$, such that with respect to this basis, $\ad (e_{n+1})_{\mid \C^n}$ is expressed in 
canonical Jordan form, i.e., 
\[ \ad (e_{n+1})_{\mid \C^n}=
\left( \begin{array}{ccccc}
B_1 & 0 & 0 & \cdots & 0\\
0 & B_2 & 0 & \cdots & 0 \\
0 & 0 & B_3 & \cdots & 0 \\
\vdots & \vdots & \vdots&  \ddots & \vdots \\
0 & 0 & 0 & \cdots & B_k 
\end{array}
\right)\]
where each $B_i$ is a block matrix of the form 
\[ B_i = 
\left( \begin{array}{cccccc}
\lambda_i & 1 & 0 & \cdots & 0& 0\\
0 & \lambda_i & 1 & \cdots & 0& 0 \\
0 & 0 & \lambda_i & \cdots & 0& 0 \\
\vdots & \vdots & \vdots&  \ddots &\vdots &  \vdots \\
0 & 0 & 0 & \cdots & \lambda_i &1 \\
0 & 0 & 0 & \cdots &0 &  \lambda_i 
\end{array}
\right)\]

We can apply the lemmas on fixed vectors to prove the following result.

\begin{prop}\label{7.1} 
Let $e_1,e_2,\ldots,e_n,e_{n+1}$ be the basis of $\Lg=\C^n \rtimes \C$ as described above. Then for any 
almost inner derivation  $D:\Lg\to \Lg$ determined by a map $\varphi_D$, any basis vector is fixed. It follows 
that $\AID(\Lg)=\Inn(\Lg)$.
\end{prop}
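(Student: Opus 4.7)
The plan is to apply Corollary \ref{allfixed}: it suffices to show that every basis vector of $\Lg=\C^n\rtimes\C$ is fixed for any $D\in\AID(\Lg)$ determined by a map $\phi_D$.

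For $1\leq j\leq n$, the centralizer $C_\Lg(e_j)$ contains all of the abelian ideal $\C^n=\langle e_1,\ldots,e_n\rangle$, and possibly $e_{n+1}$ as well, so it contains either $n$ or $n+1=\dim\Lg$ basis vectors. Lemma \ref{3.3} then immediately gives that $e_j$ is fixed. All the work therefore concentrates on $e_{n+1}$.

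Let $J=\{j\in\{1,\ldots,n\}\mid [e_{n+1},e_j]\neq 0\}$. To show $e_{n+1}$ is fixed, I must prove that $t_{n+1}(\phi_D(e_j))$ takes a common value as $j$ ranges over $J$, and I will obtain this pair by pair via Lemma \ref{3.7} or Lemma \ref{3.8} with $i=n+1$. Since $\C^n$ is abelian, $[e_j,\Lg_{n+1}]=[e_j,\C^n]=0$ (and similarly for $e_k$), so the containment hypotheses on $[e_j,\Lg_i]$ and $[e_k,\Lg_i]$ in those lemmas are automatic. The Jordan form shows that $[e_j,e_{n+1}]$ equals $-\la_s e_j$ when $e_j$ is the first vector of its block with eigenvalue $\la_s$, and $-e_{j-1}-\la_s e_j$ otherwise; in both cases the support lies inside the range of indices of the Jordan block containing $e_j$.

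For a pair $j,k\in J$ with $j<k$, I distinguish three cases. If $e_j,e_k$ lie in different Jordan blocks, or in the same block with $k\geq j+2$, the supports of $[e_j,e_{n+1}]$ and $[e_k,e_{n+1}]$ are sufficiently separated that taking $l,m$ equal to the top support indices puts the residuals in $\Lg_{l,m}$, and Lemma \ref{3.7} applies. The delicate case is adjacency in the same block, $k=j+1$. If $e_j$ is not the first vector of its block, shifting to $l=j-1$ absorbs the offending $-e_j$ coming from $[e_k,e_{n+1}]=-e_j-\la_s e_k$ into the principal $\be$-part, and Lemma \ref{3.7} keeps working (with $\al=-1$; the choice of $m$ and $\be$ depends on whether $\la_s=0$). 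If $e_j$ \emph{is} the first vector, so that $\la_s\neq 0$ is forced by $e_j\in J$ and the shift is unavailable, I invoke Lemma \ref{3.8} with $l=j$, $\al=-\la_s$, $\be=-1$; the residuals $0$ and $-\la_s e_k$ then both lie in $\Lg_j$. In every case one concludes $t_{n+1}(\phi_D(e_j))=t_{n+1}(\phi_D(e_k))$, hence $e_{n+1}$ is fixed, and Corollary \ref{allfixed} yields $D\in\Inn(\Lg)$.

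The main obstacle is the bookkeeping in the adjacent-in-same-block case: the Jordan creation term $-e_{j-1}$ appearing in $[e_k,e_{n+1}]$ can collide with one of the indices $l,m$ required by Lemma \ref{3.7}, forcing either the shift $l\mapsto j-1$ or, when that shift is itself blocked by $e_j$ being the first vector of its block, a passage to the single-direction variant Lemma \ref{3.8}.
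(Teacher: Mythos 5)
Your proof is correct and follows essentially the same route as the paper: Lemma~\ref{3.3} disposes of $e_1,\ldots,e_n$, and $e_{n+1}$ is handled by a case analysis on Jordan blocks via Lemmas~\ref{3.7} and~\ref{3.8}, feeding into Corollary~\ref{allfixed}. The only cosmetic difference is in the adjacent-same-block case, where the paper splits on $\lambda\neq 0$ versus $\lambda=0$ (using Lemma~\ref{3.8} whenever $\lambda\neq 0$), while you split on whether $e_j$ is the first vector of its block; both bookkeepings check out.
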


\begin{proof}
Let $i\in \{1,2,\ldots, n\}$, then all basis vectors $e_1,e_2,\ldots, e_n\in C_{\Lg}(e_i)$.
Hence $e_i$ is fixed by Lemma $\ref{3.3}$. So it suffices to show that $e_{n+1}$ is fixed. Therefore, we need to show 
that for any $e_j,e_k\not \in C_{\Lg}(e_{n+1})$ (with $1\leq j < k \leq n$) we have that 
\[ t_{n+1}(\varphi_D(e_j)) = t_{n+1}(\varphi_D(e_k)).\]
There are three different cases: \\[0.2cm]
{\em Case 1:} $e_j$ and $e_k$ are basis vectors for different Jordan blocks.  It follows that there 
exist $\lambda, \lambda'\in \C$ such that 
\begin{align*}
[e_{n+1}, e_j] & = \lambda e_j \mbox{ or }\lambda e_j + e_{j-1}, \\
[e_{n+1}, e_k] & = \lambda' e_k \mbox{ or } \lambda' e_k + e_{k-1}. 
\end{align*}
The two possibilities for each bracket are necessary for including the cases $\la=0$ or $\la'=0$.
In all of the situations above,  we can use Lemma $\ref{3.7}$, with $l=j$ or $j-1$ and $m=k$ or $k-1$,
to conclude that  $t_{n+1}(\varphi_D(e_j)) = t_{n+1}(\varphi_D(e_k))$. \\[0.2cm]
{\em Case 2:} $e_j$ and $e_k$ are basis vectors for the same Jordan block and $k-j \geq 2$. In this case we have exactly 
the same conclusion as in the previous case. \\[0.2cm]
{\em Case 3:} We have $k=j+1$ and $e_{j+1}$ and $e_{j}$ are basis vectors for the same Jordan block. In this case 
there is a $\lambda \in \C$ such that
\begin{align*}
[e_{n+1}, e_{j+1}] & = \lambda e_{j+1} + e_{j}, \\
[e_{n+1}, e_j]  & = \lambda e_j \mbox{ or } \lambda e_j + e_{j-1}. 
\end{align*}
If $\lambda\neq 0$, then Lemma $\ref{3.8}$, with $l=j$ allows us to conclude that 
$t_{n+1}(\varphi_D(e_{j+1})) = t_{n+1}(\varphi_D(e_j))$. On the other hand, if $\lambda=0$, then we must have that 
$[e_{n+1}, e_{j+1}]=e_j$ and $[e_{n+1},e_{j}]= e_{j-1}$, because otherwise $e_j\in C_\Lg(e_{n+1})$. In this case, we can again use 
Lemma $\ref{3.7}$, with $l=j-1$ and $m=j$, to conclude that  $t_{n+1}(\varphi_D(e_{j+1})) = t_{n+1}(\varphi_D(e_j))$.
\end{proof}

Denote by $\Lf_n$ the standard graded filiform nilpotent Lie algebra of dimension $n$, defined by the
Lie brackets $[e_1,e_i]=e_{i+1}$ for $i=2,\ldots ,n-1$ in the basis $(e_1,\ldots ,e_n)$. Clearly we have
$\Lf_n\cong \C^{n-1}\rtimes \C$ with $\C=\langle e_1\rangle$ over $\C$. Hence we obtain the following result
as a corollary of Proposition $\ref{7.1}$.

\begin{prop}\label{7.2}
The filiform nilpotent Lie algebra $\Lf_n$ satisfies $\AID(\Lf_n)=\CAID(\Lf_n)=\Inn(\Lf_n)$.
\end{prop}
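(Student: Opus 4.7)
The plan is to recognize Proposition 7.2 as a direct corollary of Proposition 7.1. First, I will verify that $\Lf_n$ fits into the almost abelian framework of the preceding proposition: the subspace $\langle e_2,\ldots,e_n\rangle$ is an abelian ideal of codimension $1$, and $\ad(e_1)$ acts on it by sending $e_i\mapsto e_{i+1}$ for $2\le i\le n-1$ and $e_n\mapsto 0$, which is precisely a single Jordan block of size $n-1$ with eigenvalue $\lambda=0$. Hence after renaming $e_1$ as the ``last'' basis vector (playing the role of $e_{n+1}$ in Proposition $\ref{7.1}$) and $e_2,\ldots,e_n$ as the basis of the abelian ideal in Jordan form, $\Lf_n$ is an almost abelian Lie algebra of the type treated there.

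Next I apply Proposition $\ref{7.1}$ directly to obtain $\AID(\Lf_n)=\Inn(\Lf_n)$. Then the chain of inclusions $\Inn(\Lg)\subseteq \CAID(\Lg)\subseteq \AID(\Lg)$ established earlier in the preliminaries forces $\CAID(\Lf_n)=\Inn(\Lf_n)$ as well, yielding both equalities simultaneously.

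There is essentially no obstacle: all the real work has been carried out in Proposition $\ref{7.1}$, which handled the three Jordan-block configurations via Lemmas $\ref{3.7}$ and $\ref{3.8}$ and the fixed-basis-vector machinery. The only thing one might wish to record for the reader's comfort is an explicit identification of the isomorphism $\Lf_n\cong \C^{n-1}\rtimes \C$ with $\C=\langle e_1\rangle$, which is already pointed out in the paragraph immediately preceding the statement. So the proof is a one-line invocation of Proposition $\ref{7.1}$ together with the sandwich inclusion.
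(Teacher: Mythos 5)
Your proposal is correct and matches the paper exactly: the paper likewise derives Proposition \ref{7.2} as an immediate corollary of Proposition \ref{7.1} via the identification $\Lf_n\cong \C^{n-1}\rtimes\C$ with $\C=\langle e_1\rangle$, with the equality $\CAID(\Lf_n)=\Inn(\Lf_n)$ following from the inclusion chain. No further comment is needed.
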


We already have seen in Example $\ref{2.7}$, that filiform nilpotent Lie algebras 
can have more interesting almost inner derivations than just inner ones. The algebra $\Lg_{5,6}$ in this
example is metabelian filiform. It turns out that this example generalizes to all metabelian filiform
Lie algebras of dimension $n\ge 5$. It has been shown in  \cite{BRA} that every metabelian filiform Lie algebra $\Lg$
of dimension $n\ge 3$  has an {\em adapted basis} $(e_1,\ldots,e_n)$ such that
\begin{align*}
[e_1,e_i] & = e_{i+1}, \; 1\le i\le n-1 \\
[e_2,e_k] & = \al_{2,5}e_{2+k}+\cdots + \al_{2,n-k+3}e_n,\; 3\le k\le n-2 \\
[e_i,e_k] & =0,\; i,k\ge 3,
\end{align*}
with structure constants $\{\al_{2,k}\mid 5\le k\le n\}$. Clearly $\Lg\cong\Lf_n$ if and only if all structure 
constants are zero.

\begin{lem}
Let $\Lg$ be a complex metabelian filiform Lie algebra of dimension $n\ge 3$ and let $D\in \AID(\Lg)$. 
Then there exists a $v \in \Lg$ and a $\lambda \in \C$ such that
\[ D -\ad_v = \lambda E_{n,2}.\]
\end{lem}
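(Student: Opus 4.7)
The plan is to show that after subtracting a suitable inner derivation from $D$, the result is a scalar multiple of $E_{n,2}$. If all structure constants $\al_{2,k}$ vanish then $\Lg\cong\Lf_n$ and Proposition~$\ref{7.2}$ gives $D\in\Inn(\Lg)$, so the conclusion holds with $\lambda=0$; we may therefore assume $k_0:=\min\{k\ge 5 : \al_{2,k}\ne 0\}$ is finite. Since $D(e_1)\in[\Lg,e_1]=\langle e_3,\ldots,e_n\rangle$, one can choose $w\in\Lg$ with $\ad_w(e_1)=D(e_1)$ and replace $D$ by $D-\ad_w$, so from now on $D(e_1)=0$. The derivation property then yields $D(e_{k+1})=[e_1,D(e_k)]$, so $D$ is determined by $D(e_2)=:\sum_{k=3}^n d_k e_k$, namely
\[
D(e_k)=d_3 e_{k+1}+d_4 e_{k+2}+\cdots+d_{n-k+2}e_n\quad\text{for }2\le k\le n-1,
\]
and $D(e_n)=0$.

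A direct computation from the defining brackets gives
\[
[\Lg,e_2]=\langle e_3,e_{k_0},e_{k_0+1},\ldots,e_n\rangle,
\]
so the almost inner condition $D(e_2)\in[\Lg,e_2]$ forces $d_4=d_5=\cdots=d_{k_0-1}=0$. The decisive step is then $D(e_3)\in[\Lg,e_3]$. Here
\[
[\Lg,e_3]=\langle e_4,\;\al_{2,k_0}e_{k_0}+\al_{2,k_0+1}e_{k_0+1}+\cdots+\al_{2,n}e_n\rangle,
\]
while the explicit formula above gives $D(e_3)=d_3 e_4+d_{k_0}e_{k_0+1}+d_{k_0+1}e_{k_0+2}+\cdots+d_{n-1}e_n$, which has zero $e_{k_0}$-coefficient. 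Writing $D(e_3)=c_1 e_4+c_2(\al_{2,k_0}e_{k_0}+\cdots+\al_{2,n}e_n)$ and comparing the $e_{k_0}$-coefficient forces $c_2\al_{2,k_0}=0$, hence $c_2=0$ and $d_{k_0}=d_{k_0+1}=\cdots=d_{n-1}=0$. Therefore $D(e_2)=d_3 e_3+d_n e_n$.

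Finally, subtracting $\ad_{d_3 e_1}$ preserves $D(e_1)=0$, removes the $e_3$-part of $D(e_2)$, and cancels the term $d_3 e_{k+1}$ of $D(e_k)$ for $3\le k\le n-1$; the resulting derivation sends $e_2$ to $d_n e_n$ and every other basis vector to zero. Setting $v:=w+d_3 e_1$ and $\lambda:=d_n$ gives $D-\ad_v=\lambda E_{n,2}$, as required. The only delicate point is the single $e_{k_0}$-coefficient comparison that eliminates $c_2$; the rest is bookkeeping with the explicit brackets and the iterated action of $\ad(e_1)$.
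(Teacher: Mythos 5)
Your proof is correct, and it takes a genuinely different route from the paper's. The paper argues by induction on the dimension: it passes to the quotient $\Lg/\langle e_n\rangle$, which is again metabelian filiform, uses the induction hypothesis to reduce to the situation $D(e_1)=0$, $D(e_2)=\mu e_{n-1}+\lambda e_n$, and then kills $\mu$ by contradiction (if $\mu\neq 0$, the condition $D(e_3)\in[\Lg,e_3]$ forces $\al_{2,5}=\cdots=\al_{2,n-1}=0$, after which the condition $D(e_2)\in[\Lg,e_2]$ cannot produce the component $\mu e_{n-1}$). You avoid the induction entirely: after normalizing $D(e_1)=0$ you propagate $D$ from $D(e_2)$ by iterating $\ad (e_1)$, and then the two constraints $D(e_2)\in[\Lg,e_2]$ and $D(e_3)\in[\Lg,e_3]$, read against the triangular structure of the brackets $[e_2,e_k]$ relative to the minimal nonzero structure constant $\al_{2,k_0}$, pin down $D(e_2)\in\langle e_3,e_n\rangle$ in one pass; the $e_3$-component is then absorbed into $\ad(d_3e_1)$. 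I checked the two computations your argument hinges on --- $[\Lg,e_2]=\langle e_3,e_{k_0},\ldots,e_n\rangle$ and the vanishing of the $e_{k_0}$-coefficient of $D(e_3)$ --- and both are right, including the degenerate case $k_0=n$ where the second step is vacuous but also unneeded. What your version buys is a self-contained, non-inductive argument that isolates exactly which two almost-inner constraints do the work; what the paper's induction buys is that the quotient step disposes of $D(e_4),\ldots,D(e_n)$ without the explicit propagation formula, at the price of the more roundabout contradiction at the end.
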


\begin{proof}
We proceed by induction on the dimension $n$. If $n<5$, then $\Lg$ is a standard filiform Lie algebra and all almost inner 
derivations are inner by Proposition $\ref{7.2}$. So the result holds, with $\lambda = 0$.
So assume that $n\geq 5$ and that the lemma is valid for metabelian filiform Lie algebras of smaller dimensions. 
Let $D\in \AID(\Lg)$. Then $D$ induces an almost inner derivation $\bar{D}$ on $\Lg/\langle e_n \rangle$. By induction, we may 
assume, after changing $D$ up to an inner derivation, that we have  
$\bar{D}= \mu E_{n-1,2}$ for some $\mu \in \C$. This implies that $D(e_1)= a e_n$ for some $a\in \C$. Now, replace 
$D$, with $D'= D+ \ad_{a\,e_{n-1}}$. Then we have
\begin{align*} 
D'(e_1) & =D(e_1) + [a e_{n-1}, e_1]=0, \\
D'(e_i) & = D(e_i) + [a e_{n-1}, e_i] = D(e_i) \mbox{ for } i\geq 2.
\end{align*}
In particular, we have that 
\[
 D'(e_2) = D(e_2)= \mu e_{n-1} + \lambda e_{n} \mbox{ for some }\mu,\lambda\in \C.
\] 
From this it follows that 
\begin{align*}
D'(e_3) & = D'[e_1,e_2]= [D'(e_1), e_2]+ [e_1, D'(e_2)]= \mu e_n, \\
D'(e_4) & = D'[e_1,e_3]=  [D'(e_1), e_3]+ [e_1, D'(e_3)]=0 ,
\end{align*}
and analogously $D'(e_i)=0$ for $i \geq 5$. To finish the proof, we have to show that $\mu=0$. So
assume that $\mu\neq 0$. \\
Since we have $D'(e_3) = \mu e_n$ and $D'\in \AID(\Lg)$, there must exist an element $\sum_{i=1}^n a_i e_i\in \Lg$ 
with $[\sum_{i=1}^n a_i e_i , e_3] = \mu e_n$. This leads to the equation 
\[ a_1 e_4 + a_2[e_2,e_3] = \mu e_n,\]
which expands to 
\[ a_1 e_4 + a_2 (\alpha_{2,5} e_5+ \alpha_{2,6} e_6 + \cdots + \alpha_{2,n} e_n) = \mu e_n. \]
As we assume that $\mu\neq 0$, this implies  
\[ \alpha_{2,5}=\alpha_{2,6} = \cdots = \alpha_{2,n-1}=0.\]
As a conclusion thus far, we have found that when $\mu \neq 0$, then  the basis vectors $e_i$ satisfy
\begin{align*}
[e_1,e_i] & = e_{i+1}, \; 2\le i\le n-1 \\
[e_2,e_3] & = \al_{2,n} e_n\\
[e_2, e_i] & = 0, \; 4 \le i \le n\\
[e_i,e_j] & = 0,\; i,j\geq 3.
\end{align*}
There must also exist an element $\sum_{i=1}^n b_i e_i\in \Lg$ with $D(e_2) = [  \sum_{i=1}^n b_i e_i, e_2]$.
This leads to the equation 
\[ \mu e_{n-1} + \lambda e_n = b_1 e_3 -  b_3 \al_{2,n} e_n.\] 
Since we are assuming that $\mu \neq 0$, this equation does not have a solution, which is a contradiction. 
Hence indeed $\mu=0$, and therefore $D'= \lambda E_{2,n}$, which was to be shown.
\end{proof}

The lemma now easily implies the following result.

\begin{prop}
Let $\Lg$ be a metabelian filiform Lie algebra of dimension $n\ge 5$, which is different from $\Lf_n$. Then 
\[
\AID(\Lg)=\CAID(\Lg)=\Inn(\Lg)\oplus \langle E_{n,2}\rangle.
\]
\end{prop}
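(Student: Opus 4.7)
The plan is to combine the preceding lemma (which caps $\AID(\Lg)$ from above) with two elementary verifications: that $E_{n,2}$ is an almost inner derivation, and that it is not inner.

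First I would appeal to the lemma: every $D\in\AID(\Lg)$ satisfies $D=\ad(v)+\lambda E_{n,2}$ for some $v\in\Lg$ and $\lambda\in\C$, which gives the inclusion $\AID(\Lg)\subseteq \Inn(\Lg)+\langle E_{n,2}\rangle$. To upgrade this to equality, I need to verify that $E_{n,2}\in\AID(\Lg)$. That $E_{n,2}$ is a derivation is immediate from the adapted basis relations: every bracket $[e_i,e_j]$ lies in $\langle e_3,\ldots,e_n\rangle$, so $E_{n,2}$ vanishes on all brackets, and the Leibniz identity reduces to checking the two cases involving $e_2$, both of which boil down to $[e_1,e_n]=0$ and $[e_n,e_k]=0$ (since $e_n$ is central in any filiform Lie algebra).

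For the almost inner condition, take $x=\sum_i\alpha_ie_i$, so $E_{n,2}(x)=\alpha_2e_n$. I need to produce $y\in\Lg$ with $[y,x]=\alpha_2e_n$. If $\alpha_1\neq0$, the element $y=-(\alpha_2/\alpha_1)e_{n-1}$ works, since $[e_{n-1},e_1]=-e_n$ and $[e_{n-1},e_j]=0$ for $j\ge 2$. If $\alpha_1=0$ and $\alpha_2\neq0$, let $k\in\{5,\ldots,n\}$ be the smallest index with $\alpha_{2,k}\neq 0$ (this exists by the hypothesis $\Lg\ne\Lf_n$). Then the adapted-basis formula for $[e_2,e_j]$ applied at $j=n-k+3$ collapses to the single term $\alpha_{2,k}e_n$, and since $[e_{n-k+3},e_j]=0$ for $j\ge 3$, one checks that $y=-(1/\alpha_{2,k})e_{n-k+3}$ satisfies $[y,x]=\alpha_2e_n$. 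The case $\alpha_1=\alpha_2=0$ is trivial. So $E_{n,2}\in\AID(\Lg)$, and the key role of the hypothesis $\Lg\ne\Lf_n$ is exactly to guarantee that some $\alpha_{2,k}\ne 0$ exists.

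Next I would show $E_{n,2}\notin\Inn(\Lg)$, which makes the sum direct. If $E_{n,2}=\ad(y)$ with $y=\sum\gamma_ie_i$, then $\ad(y)(e_1)=0$ forces $\gamma_2=\cdots=\gamma_{n-1}=0$ (since these coefficients appear independently as $-\gamma_ie_{i+1}$), leaving $y=\gamma_1e_1+\gamma_ne_n$. But then $\ad(y)(e_2)=\gamma_1e_3$, which cannot equal $e_n$. This yields $\AID(\Lg)=\Inn(\Lg)\oplus\langle E_{n,2}\rangle$.

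Finally, for the central almost inner part, I would observe that $E_{n,2}(\Lg)\subseteq\langle e_n\rangle\subseteq Z(\Lg)$, so $E_{n,2}-\ad(0)$ maps $\Lg$ into $Z(\Lg)$, which means $E_{n,2}\in\CAID(\Lg)$. Combined with the general inclusions $\Inn(\Lg)\subseteq\CAID(\Lg)\subseteq\AID(\Lg)$ already established, this forces $\CAID(\Lg)=\AID(\Lg)$. The only potential obstacle is the almost inner verification when $\alpha_1=0$, where one must genuinely use the smallest-index trick to extract a clean $e_n$-coefficient; everything else is bookkeeping against the adapted basis.
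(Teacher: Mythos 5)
Your argument is correct and follows essentially the same route as the paper: invoke the preceding lemma for the inclusion $\AID(\Lg)\subseteq\Inn(\Lg)+\langle E_{n,2}\rangle$, then verify that $E_{n,2}$ is almost inner via the identical case split, using $\frac{\al_2}{\al_1}e_{n-1}$ when $\al_1\neq 0$ and $\frac{1}{\al_{2,i}}e_{n-i+3}$ (with $i$ the minimal index with $\al_{2,i}\neq 0$) when $\al_1=0$. Your additional explicit checks that $E_{n,2}$ is a non-inner derivation and lies in $\CAID(\Lg)$ because $e_n\in Z(\Lg)$ are correct details the paper leaves implicit.
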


\begin{proof}
We only have to show that $D=E_{n,2}$ is an almost inner derivation. If $x=\sum_{i=1}^n\be_ie_i$ with
$\be_1\neq 0$, then $D(x)=\be_2e_n=[x,\frac{\be_2}{\be_1}e_{n-1}]$. Otherwise $\be_1=0$. Since $\Lg$ is not the
standard graded algebra $\Lf_n$, there exists a minimal index $i$ with $5\le i\le n$ such that $\al_{2,i}\neq 0$.
Then, for $k=n-i+3\ge 3$ we have $D(x)=\be_2e_n=[x,\frac{1}{\al_{2,i}}e_k]$. Hence $D(x)\in [\Lg,x]$ for all $x\in \Lg$.
\end{proof}

\begin{rem}
There are also filiform nilpotent Lie algebras $\Lg$ with $\dim (\AID(\Lg)/\Inn(\Lg))\ge 2$ for $\dim (\Lg)\ge 7$;
of course with $d(\Lg)\ge 3$. The following table shows the dimensions of the derivations spaces for all complex filiform nilpotent
Lie algebras of dimension $7$, with Magnin's notation \cite{MAG}. For $\Lg_{7,1.1(i_{\la})}$ we have $\la\neq 0,1$.
The two cases $\la=0$ and $\la=1$ are listed separately. The last column, when non zero, gives examples of almost inner 
derivations, which together with the inner derivations generate $\AID(\Lg)$.
\vspace*{0.5cm}
\begin{center}
\begin{tabular}{c|c|c|c|c|c|c|c}
Magnin & $c(\Lg)$ & $d(\Lg)$ & $\dim \Inn(\Lg)$  & $\dim \CAID(\Lg)$ &  $\dim \AID(\Lg)$ & $\dim \Der(\Lg)$ & $D $\\
\hline
$\Lg_{7,0.1}$ & $6$ & $3$ & $6$ & $7$ & $8$ & $10$ & $E_{6,2}+E_{7,3},E_{7,2}$\\
\hline
$\Lg_{7,0.2}$ & $6$ & $2$ & $6$ & $7$ & $7$ & $10$ & $E_{7,2}$ \\
\hline
$\Lg_{7,0.3}$ & $6$ & $2$ & $6$ & $7$ & $7$ & $11$  & $E_{7,2}$\\ 
\hline
$\Lg_{7,1.1(i_{\la})}$ & $6$ & $3$ & $6$ & $7$ & $8$ & $10$  & $E_{6,2}+E_{7,3},E_{7,2}$\\ 
\hline
$\Lg_{7,1.1(i_0)}$ & $6$ & $3$ & $6$ & $6$ & $6$ & $10$  & $0$\\ 
\hline
$\Lg_{7,1.1(i_1)}$ & $6$ & $2$ & $6$ & $7$ & $7$ & $11$  & $E_{7,2}$\\ 
\hline
$\Lg_{7,1.1(ii)}$ & $6$ & $3$ & $6$ & $7$ & $7$ & $11$  & $E_{7,2}$\\ 
\hline
$\Lg_{7,1.4}$ & $6$ & $2$ & $6$ & $7$ & $7$ & $12$  & $E_{7,2}$\\ 
\hline
$\Lg_{7,1.6}$ & $6$ & $2$ & $6$ & $7$ & $7$ & $12$  & $E_{7,2}$\\ 
\hline
$\Lg_{7,2.3}$ & $6$ & $2$ & $6$ & $6$ & $6$ & $13$  & $0$\\ 
\hline
\end{tabular}
\end{center}
\end{rem}

\section{Low-dimensional Lie algebras}

Complex Lie algebras of dimension $n\le 4$ do not have non-inner almost inner derivations.
This is different in dimension $5$. In order to determine the space of almost inner derivations
we will not use a full classification of all $5$-dimensional Lie algebras, but rather a description 
of the moduli space given in \cite{FIP}. Here the authors give a  natural stratification by orbifolds, 
in terms of $24$ families of Lie algebras, with up to $4$ parameters.
This is much better than a full classification for us, because the determination of almost inner derivations 
is much more efficient for the stratification, the list of the full classification being much too long.
We already know from the table after Example $\ref{2.7}$ that every complex nilpotent Lie algebra of dimension
$5$ having a non-inner almost inner derivation is isomorphic to $\Lg_{5,3}$ or $\Lg_{5,6}$. \\ 
The most interesting family of solvable, non-nilpotent Lie algebras in this context is the family 
$d_{12}(p:q:r)$ with $p=0$ from \cite{FIP}.

\begin{defi}
The family of complex $5$-dimensional Lie algebras $A(q,r)=d_{12}(0:q:r)$ is defined by the Lie brackets
\begin{align*}
[e_1,e_5] & = e_2, \\
[e_2,e_5] & = (q+r)e_2, \\
[e_3,e_4] & = e_2, \\
[e_3,e_5] & = e_1+qe_3, \\
[e_4,e_5] & = e_3+re_4. \\
\end{align*}
\end{defi}

It is straightforward to compute the almost inner derivations of this family.

\begin{lem}\label{8.2}
We have
\begin{align*}
\dim \Der (A(q,r)) & = \begin{cases} 7, & \text{if } (q,r)\neq(0,0) \\ 
 8, & \text{otherwise} \end{cases} \\
\dim \Inn (A(q,r)) & = \hspace{0.35cm} 4, \hspace{0.4cm} \text{for all } q,r\in \C\\
\dim \AID (A(q,r)) & = \begin{cases} 4, & \text{if } qr\neq 0, q+r\neq 0 \\ 
5, & \text{otherwise} \end{cases} 
\end{align*}
\end{lem}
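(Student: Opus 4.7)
The proof is a direct linear-algebra computation, which I would organise in three stages.

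\textbf{Stage one: $\dim \Inn$.} I compute $Z(A(q,r))$ by intersecting the kernels of $\ad(e_i)$. Commutation of $x=\sum a_i e_i$ with $e_3$ and $e_4$ forces $a_5=a_4=a_3=0$, and commutation with $e_5$ then reduces to the single relation $a_1+(q+r)a_2=0$. Hence $\dim Z(A(q,r))=1$ for every $(q,r)\in\C^2$, and $\dim \Inn(A(q,r))=5-1=4$.

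\textbf{Stage two: $\dim \Der$.} I parametrise a general derivation $D$ by the 25 entries of its matrix in the basis $(e_1,\ldots,e_5)$ and impose the Leibniz rule $D[e_i,e_j]=[D(e_i),e_j]+[e_i,D(e_j)]$ on the five non-trivial brackets. Solving the resulting linear system yields a 7-dimensional kernel for generic $(q,r)$ and an 8-dimensional one when $q=r=0$; the jump arises because at the origin the restriction of $\ad(e_5)$ to $\langle e_1,e_2,e_3,e_4\rangle$ becomes a single nilpotent Jordan block (after reordering the basis to $e_4,e_3,e_1,e_2$), so its centraliser in $\End(\C^4)$ picks up an extra dimension.

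\textbf{Stage three: $\dim \AID$.} Starting from the 7-parameter family of derivations, I impose $D(e_i)\in[\Lg,e_i]$ for $i=1,\ldots,5$, using
\begin{align*}
[\Lg,e_1] &= \langle e_2\rangle,\\
[\Lg,e_2] &= (q+r)\langle e_2\rangle,\\
[\Lg,e_3] &= \langle e_2,\, e_1+qe_3\rangle,\\
[\Lg,e_4] &= \langle e_2,\, e_3+re_4\rangle,\\
[\Lg,e_5] &= \langle e_2,\, e_1+qe_3,\, e_3+re_4\rangle.
\end{align*}
The rank of the corresponding linear system drops by exactly one whenever any of $q$, $r$, or $q+r$ vanishes, so the candidate space has dimension 4 in the generic case $qr(q+r)\neq 0$ and dimension 5 in each of the three degenerate cases.

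\textbf{Main obstacle.} The conditions $D(e_i)\in[\Lg,e_i]$ on basis vectors are only necessary for $D\in\AID(\Lg)$. To close the gap in each of the three degenerate cases $q=0$, $r=0$, $q+r=0$, I would exhibit an explicit non-inner candidate $D$ together with a map $\phi_D\colon \Lg\to\Lg$, defined case-by-case on the strata determined by which coordinates of $x\in\Lg$ vanish, and check that $D(x)=[x,\phi_D(x)]$ on each stratum. This mirrors the construction used for $E_{5,2}$ on $\Lg_{5,6}$ in Example \ref{2.7} and confirms that the extra candidate dimension is realised by a genuine almost inner derivation.
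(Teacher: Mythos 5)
Your proposal is correct and takes essentially the same route as the paper, which offers no written proof of this lemma beyond the remark that it follows from a straightforward computation: your three stages (center of $A(q,r)$, the Leibniz linear system for $\Der$, the conditions $D(e_i)\in[\Lg,e_i]$ followed by explicit maps $\phi_D$ in the degenerate cases) are exactly what that computation amounts to, and you rightly flag that the basis-vector conditions are only necessary, so that the extra dimension in the cases $q=0$, $r=0$, $q+r=0$ must be realised by a genuine almost inner derivation (the paper later exhibits the generators $E_{2,4}$, $E_{2,4}+qE_{3,5}$ and $r^2E_{2,1}-E_{2,4}+rE_{3,5}+r^2E_{4,5}$). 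Two small cautions that do not affect the method: the claim $\dim\Der(A(q,r))=7$ has to be checked on the entire locus $(q,r)\neq(0,0)$, including the special lines $q=0$, $r=0$, $q=r$ and $q+r=0$, not merely for generic parameters; and your heuristic for the jump at the origin is not the real mechanism, since $\langle e_1,\ldots,e_4\rangle$ is not abelian (as $[e_3,e_4]=e_2$) and the centraliser in $\End(\C^4)$ of a single nilpotent Jordan block has the same dimension, namely $4$, as that of the generic semisimple restriction of $\ad(e_5)$.
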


We can determine the Lie algebras $A(q,r)$ with $\dim \AID (A(q,r))=5$ up to isomorphism.

\begin{lem}
Every Lie algebra  $A(q,r)$ satisfying $qr=0$ or $q+r=0$ is either isomorphic to $A(1,0)$, to
$A(1,-1)$ or to $A(0,0)\cong \Lg_{5,6}$.  
\end{lem}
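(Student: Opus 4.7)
My plan is to reduce the family $A(q,r)$ on the locus $qr=0$ or $q+r=0$ to finitely many representatives by a scaling trick, collapse one of those representatives via an explicit isomorphism, and identify the nilpotent case with $\Lg_{5,6}$.

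For the rescaling, I would verify that for any $\la\in \C\setminus\{0\}$ the substitution $e_i\mapsto \la^{\mu_i}e_i$ with weights $(\mu_1,\mu_2,\mu_3,\mu_4,\mu_5)=(4,5,3,2,1)$ defines an isomorphism $A(q,r)\cong A(\la q,\la r)$. The exponents are forced by the five defining brackets (each weight equation $\mu_i+\mu_5=\mu_j$ contributes one linear constraint), and the verification is a routine calculation. Using this we can normalize any nonzero coordinate of $(q,r)$ to $1$, which on the given locus leaves only the four cases $A(0,0)$, $A(1,0)$, $A(0,1)$ and $A(1,-1)$ to distinguish.

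Next I would collapse $A(0,1)$ into $A(1,0)$ by the explicit linear map $\phi$, defined on the basis by $\phi(e_1)=f_1$, $\phi(e_2)=f_2$, $\phi(e_3)=f_3-f_4$, $\phi(e_4)=f_4$, $\phi(e_5)=f_5$, where $f_1,\ldots,f_5$ is the basis of the target. One checks each of the five defining brackets of $A(0,1)$ in the target; for instance $[\phi(e_4),\phi(e_5)]=[f_4,f_5]=f_3$ reproduces $[e_4,e_5]=e_3+e_4$ because $\phi(e_3+e_4)=f_3$, and $[\phi(e_3),\phi(e_5)]=(f_1+f_3)-f_3=f_1$ matches $[e_3,e_5]=e_1$. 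For the nilpotent case I would introduce the basis $b_1=e_5$, $b_2=-e_4$, $b_3=e_3$, $b_4=-e_1$, $b_5=e_2$ of $A(0,0)$; a short computation then yields $[b_1,b_i]=b_{i+1}$ for $i=2,3,4$ and $[b_2,b_3]=b_5$ with all other brackets of basis vectors vanishing, which is precisely the presentation of $\Lg_{5,6}$.

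The delicate step is discovering $\phi$. The defining relations are asymmetric in $q$ and $r$ (the extra summand $e_1$ occurs in $[e_3,e_5]$ but there is no analogous term in $[e_4,e_5]$), so the isomorphism $A(0,1)\cong A(1,0)$ is not visible from the parameters alone. I would find the mixing $f_3-f_4$ by comparing the generalized eigenspace decompositions of $\ad(e_5)$ on the codimension-$1$ ideal $\langle e_1,e_2,e_3,e_4\rangle$: in both algebras $\ad(e_5)$ has eigenvalues $0$ and $1$, each of algebraic multiplicity $2$ and geometric multiplicity $1$, and matching the corresponding Jordan chains produces the asserted map.
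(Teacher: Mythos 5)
Your proof is correct, and it takes a genuinely different route from the paper's. The paper first invokes the symmetry $A(q,r)\cong A(r,q)$ (citing \cite{FIP} without proof) and then writes down two parameter-dependent $5\times 5$ matrices giving isomorphisms $A(q,0)\to A(1,0)$ and $A(q,-q)\to A(1,-1)$; it identifies $A(0,0)$ with $\Lg_{5,6}$ without an explicit basis change. You instead exploit the graded rescaling $e_i\mapsto \la^{\mu_i}e_i$ with weights $(4,5,3,2,1)$, which I have checked: the four homogeneity constraints $\mu_1+\mu_5=\mu_2$, $\mu_3+\mu_4=\mu_2$, $\mu_3+\mu_5=\mu_1$, $\mu_4+\mu_5=\mu_3$ together with $\mu_5=1$ do force exactly these weights, and the two structure constants then scale as $q\mapsto\la q$, $r\mapsto\la r$. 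This collapses both one-parameter loci at once and leaves only the finite list $A(0,0),A(1,0),A(0,1),A(1,-1)$, after which your single fixed isomorphism $\phi(e_3)=f_3-f_4$ (all other basis vectors sent to their counterparts) correctly identifies $A(0,1)$ with $A(1,0)$ --- all five brackets check out --- and your basis $b_1=e_5$, $b_2=-e_4$, $b_3=e_3$, $b_4=-e_1$, $b_5=e_2$ does realize $A(0,0)$ as $\Lg_{5,6}$. Your version buys self-containedness (no appeal to the symmetry from \cite{FIP}) and replaces two $q$-dependent matrices by one easily verified scaling action; the paper's version buys slightly less case analysis by quoting the symmetry wholesale. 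One small omission on your side: the paper's proof also verifies that the three target algebras are pairwise non-isomorphic ($A(1,-1)$ is unimodular while $A(1,0)$ is not, and only $A(0,0)$ is nilpotent). That check is not required by the literal statement of the lemma, but it is what makes the list in the subsequent proposition non-redundant, so you may want to append it. A cosmetic point: not every weight constraint has the form $\mu_i+\mu_5=\mu_j$ (the bracket $[e_3,e_4]=e_2$ gives $\mu_3+\mu_4=\mu_2$), so the parenthetical in your second paragraph should be phrased slightly more carefully.
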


\begin{proof}
Note that $A(q,r)\cong A(r,q)$, see \cite{FIP}. It is easy to see that $A(0,0)$ is filiform nilpotent and
isomorphic to $\Lg_{5,6}$. So we may assume that $(q,r)\neq (0,0)$. 
Suppose first that $qr=0$. Then we may assume $q\neq 0$ and $r=0$, and there is an Lie algebra
isomorphism $\phi\colon A(q,0)\ra A(1,0)$ given by
\[
\begin{pmatrix} 
q^2 & 0 & 0 & 0 & 0 \\
1-q^2 & q & 0 & 0 & 0 \\
0 & 0 & q & 0 & 0 \\
0 & 0 & 0 & 1 & \frac{1-q^2}{q} \\
0 & 0 & 0 & 0 & q
\end{pmatrix}.
\]
Secondly, let $q+r=0$ and $q\neq 0$. Then there is an Lie algebra isomorphism $\phi\colon A(q,-q)\ra A(1,-1)$ 
given by
\[
\begin{pmatrix} 
1 & 0 &  \frac{q^2-1}{q}  &  \frac{q^2-1}{q^2}  & 0 \\
0 & q &  \frac{q^2-1}{q} & 0 & 0 \\
0 & 0 & q & 0 & 0 \\
0 & 0 & 0 & 1 & 0 \\
0 & 0 & 0 & 0 & q
\end{pmatrix}.
\]
Finally, $A(q,r)$ is unimodular if and only if $q+r=0$. Hence $A(1,-1)$ is unimodular, but
$A(1,0)$ is not. So they cannot be isomorphic. Both $A(1,-1)$ and $A(1,0)$ are solvable and non-nilpotent,
whereas $A(0,0)$ is nilpotent.
\end{proof}

\begin{prop}
Every complex Lie algebra of dimension $5$ having a non-inner almost inner derivation is isomorphic
to one of the following Lie algebras:
\[
\Lg_{5,3},\; \Lg_{5,6},\; A(1,0),\; A(1,-1).
\]
\end{prop}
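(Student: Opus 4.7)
The plan is to proceed case by case through the $24$ families given by the Fialowski--Penkava--de Graaf type stratification of the moduli space of complex $5$-dimensional Lie algebras described in \cite{FIP}. The nilpotent stratum is already fully resolved by the table immediately following Example \ref{2.7}: among the nine isomorphism classes of complex $5$-dimensional nilpotent Lie algebras, only $\Lg_{5,3}$ and $\Lg_{5,6}$ satisfy $\AID(\Lg)\neq\Inn(\Lg)$. Hence the work reduces to analyzing the solvable non-nilpotent families. For each such family, I would simply set up a generic element $D\in\Der(\Lg)$ (parametrized by the structure constants of the family) and impose the defining condition $D(e_i)\in[\Lg,e_i]$ on the basis vectors; in most families a quick inspection of the adjoint actions is enough to conclude, because many solvable $5$-dimensional Lie algebras have trivial center, so by Proposition 2.5(4) they even satisfy $\CAID(\Lg)=\Inn(\Lg)$, and a short additional calculation extends this to $\AID(\Lg)=\Inn(\Lg)$. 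Several families are complete (or decompose as direct sums with semisimple factors, where Proposition 2.5(6) applies), and these contribute nothing as well.

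The only family where the computation yields a non-inner almost inner derivation is $d_{12}(p:q:r)$, and here a direct inspection (which can be carried out exactly as in the computation leading to Lemma \ref{8.2}) shows that a non-inner almost inner derivation can only occur when $p=0$. This reduces the problem precisely to the family $A(q,r)=d_{12}(0:q:r)$, and Lemma \ref{8.2} tells us that $\dim\AID(A(q,r))>\dim\Inn(A(q,r))$ exactly when $qr=0$ or $q+r=0$. The preceding isomorphism lemma then collapses this parameter set into the three representatives $A(0,0)\cong\Lg_{5,6}$, $A(1,0)$, and $A(1,-1)$, pairwise non-isomorphic because $A(0,0)$ is nilpotent, $A(1,-1)$ is unimodular solvable, and $A(1,0)$ is non-unimodular solvable.

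Combining these two inputs gives the proposition: any complex $5$-dimensional Lie algebra with a non-inner almost inner derivation is either $\Lg_{5,3}$, $\Lg_{5,6}$, $A(1,0)$, or $A(1,-1)$. The main obstacle is not conceptual but organizational: one must genuinely traverse all $24$ FIP families and certify that none of the solvable non-nilpotent strata other than $d_{12}(0:q:r)$ produces an almost inner derivation outside $\Inn(\Lg)$. In practice this is handled most efficiently by observing that in each of the remaining families either $Z(\Lg)=0$ (so Proposition 2.5(4) reduces the question to showing that no new derivations satisfying $D(x)\in[\Lg,x]$ exist modulo $\Inn(\Lg)$), or the Lie algebra is almost abelian (so Proposition \ref{7.1} applies), or one may invoke the fixed-basis-vector machinery of Section 3 (Lemmas \ref{3.7} and \ref{3.8}, together with Corollary \ref{allfixed}) to conclude $\AID(\Lg)=\Inn(\Lg)$ without an exhaustive derivation computation.
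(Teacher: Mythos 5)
Your proposal is correct and follows essentially the same route as the paper: an exhaustive traversal of the $24$ families of \cite{FIP}, disposing of the nilpotent stratum via the dimension-$5$ table, reducing the non-nilpotent case to $d_{12}(0:q:r)$, and then invoking Lemma \ref{8.2} together with the isomorphism lemma to arrive at $\Lg_{5,3}$, $\Lg_{5,6}$, $A(1,0)$, $A(1,-1)$. The paper likewise leaves the bulk of the family-by-family verification as ``a long, but straightforward computation,'' so your added organizational shortcuts (trivial center, almost abelian, fixed basis vectors) are a reasonable refinement rather than a different argument.
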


\begin{proof}
We use Table $3$ of \cite{FIP} listing the $24$ families of Lie algebras. For each family, or type,
we compute the spaces $\Der(\Lg)$, $\AID(\Lg)$ and $\Inn(\Lg)$ for all possible parameters.
The types $d_1,d_2,d_3,d_4,d_7,d_8,d_{10},d_{11}, d_{16},d_{17},d_{18}, d_{19}, d_{24}$ have no parameters, so that 
the computation is easy. Also, the nilpotent algebras are easy, because they correspond to choosing all parameters
equal to zero. Moreover we do know the result already for nilpotent algebras.
Note that there is an error in the Lie brackets of $d_2$ in table $3$ of \cite{FIP}, where $\psi^{12}_1$ has to be 
removed; and also in the definition on page $429$.
The hardest cases are the ones with $3$ or $4$ parameters, namely the families $d_5(p:q:r)$,
$d_{12}(p:q:r)$, $d_{20}(p:q:r:s)$ and $d_{21}(p:q:r)$. A long, but straightforward computation shows that, for
non-nilpotent algebras, the only family with non-inner almost inner derivations is $d_{12}(p:q:r)$, where
we need $p=0$. More precisely we see that only for the algebras $A(q,r)=d_{12}(0:q:r)$ with
$q=0$, or $r=0$ or $q+r=0$ this is the case. We obtain (see Lemma $\ref{8.2}$)
\[
\AID(A(q,r)) =
\begin{cases} 
\Inn (A(q,r))\oplus\langle {E_{2,4}}\rangle & \text{for } q=0,r\neq 0 \\
\Inn (A(q,r))\oplus\langle {E_{2,4}}+q{E_{3,5}}\rangle & \text{for } r=0,q\neq 0 \\
\Inn (A(q,r))\oplus\langle r^2{E_{2,1}}-{E_{2,4}}+r {E_{3,5}}+r^2{E_{4,5}}\rangle & \text{for } q+r= 0 \\
\end{cases}
\]
\end{proof}

\begin{rem}
The Lie algebra $A(1,-1)$ arises in a different context, namely in the classification of
Lie algebras admitting a Sasakian structure, see \cite{AFV}. A Sasakian structure on a Riemannian
manifold is the analogue in odd dimensions of a K\"ahler structure. Indeed, a Riemannian manifold
$M$ of odd dimension admits a compatible Sasakian structure if and only if the Riemannian cone
$M\times \R^+$ is K\"ahler. Left-invariant Sasakian structures on Lie groups can be classified by
Sasakian structures on its Lie algebras. In the classification of $5$-dimensional Sasakin Lie algebras 
in Theorem $10$ of \cite{AFV}, the Sasakian Lie algebra $\Lg_3$ is isomorphic to $A(1,-1)$ over the complex
numbers.
\end{rem}

\begin{rem}\label{8.6}
In dimension $6$ we have computed the almost inner derivations only for nilpotent Lie algebras, using
the classification given in \cite{MAG}. The result is given in the following table:
\vspace*{0.5cm}
\begin{center}
\begin{tabular}{c|c|c|c|c|c|c|c}
Magnin & $c(\Lg)$ & $d(\Lg)$ & $\dim \Inn(\Lg)$  & $\dim \CAID(\Lg)$ &  $\dim \AID(\Lg)$ & $\dim \Der(\Lg)$ & ${D} $\\
\hline
$\Lg_{6,20}$ & $5$ & $3$ & $5$ & $5$ & $6$ & $8$ & $E_{5,2}$\\
\hline
$\Lg_{6,18}$ & $5$ & $3$ & $5$ & $5$ & $5$ & $9$ & $0$ \\
\hline
$\Lg_{6,19}$ & $5$ & $2$ & $5$ & $6$ & $6$ & $9$  & $E_{6,2}$\\ 
\hline
$\Lg_{6,17}$ & $5$ & $2$ & $5$ & $6$ & $6$ & $10$  & $E_{6,2}$\\ 
\hline
$\Lg_{6,15}$ & $4$ & $2$ & $5$ & $5$ & $5$ & $10$  & $0$\\ 
\hline
$\Lg_{6,13}$ & $4$ & $2$ & $5$ & $6$ & $6$ & $10$  & $E_{6,3}$\\ 
\hline
$\Lg_{6,16}$ & $5$ & $2$ & $5$ & $5$ & $5$ & $11$  & $0$\\ 
\hline
$\Lg_{6,14}$ & $4$ & $2$ & $4$ & $4$ & $4$ & $11$  & $0$\\ 
\hline
$\Lg_{6,9}$ & $3$ & $2$ & $5$ & $5$ & $5$ & $11$  & $0$\\ 
\hline
$\Lg_{6,12}$ & $4$ & $2$ & $5$ & $5$ & $5$ & $11$  & $0$\\
\hline
$\Lg_{5,6}\oplus\C$ & $4$ & $2$ & $4$ & $5$ & $5$ & $12$  & $E_{5,2}$\\ 
\hline
$\Lg_{6,5}$ & $3$ & $2$ & $4$ & $4$ & $4$ & $12$  & $0$\\ 
\hline
$\Lg_{6,10}$ & $3$ & $2$ & $5$ & $5$ & $5$ & $12$  & $0$\\ 
\hline
$\Lg_{6,11}$ & $4$ & $2$ & $5$ & $6$ & $6$ & $12$  & $E_{6,3}$\\ 
\hline
$\Lg_{5,5}\oplus\C$ & $4$ & $2$ & $4$ & $4$ & $4$ & $13$  & $0$\\ 
\hline
$\Lg_{6,8}$ & $3$ & $2$ & $4$ & $6$ & $6$ & $13$  & $E_{5,3}, E_{6,2}$\\ 
\hline
$\Lg_{6,4}$ & $3$ & $2$ & $4$ & $4$ & $4$ & $13$  & $0$\\
\hline
$\Lg_{6,7}$ & $3$ & $2$ & $4$ & $6$ & $6$ & $14$  & $E_{6,2}, E_{6,3}$\\ 
\hline
$\Lg_{6,2}$ & $3$ & $2$ & $5$ & $5$ & $5$ & $14$  & $0$\\
\hline
$\Lg_{6,6}$ & $3$ & $2$ & $4$ & $4$ & $4$ & $15$  & $0$\\ 
\hline
$\Lg_{5,4}\oplus \C$ & $3$ & $2$ & $3$ & $3$ & $3$ & $15$  & $0$\\ 
\hline
$\Lg_{5,3}\oplus \C$ & $3$ & $2$ & $4$ & $5$ & $5$ & $15$  & $E_{5,3}$\\ 
\hline
$\Ln_3\oplus \Ln_3$ & $2$ & $2$ & $4$ & $4$ & $4$ & $16$  & $0$\\
\hline
$\Ln_4\oplus \C^2$ & $3$ & $2$ & $3$ & $3$ & $3$ & $17$  & $0$\\ 
\hline
$\Lg_{6,1}$ & $2$ & $2$ & $4$ & $6$ & $6$ & $17$  & $E_{6,3}, E_{6,4}$\\ 
\hline
$\Lg_{6,3}$ & $2$ & $2$ & $3$ & $3$ & $3$ & $18$  & $0$\\ 
\hline
$\Lg_{5,2}\oplus \C$ & $2$ & $2$ & $3$ & $3$ & $3$ & $19$  & $0$\\ 
\hline
$\Lg_{5,1}\oplus \C$ & $2$ & $2$ & $4$ & $4$ & $4$ & $21$  & $0$\\ 
\hline
$\Ln_3\oplus \C^3$ & $2$ & $2$ & $2$ & $2$ & $2$ & $24$  & $0$\\ 
\hline
$\C^6$ & $1$ & $1$ & $0$ & $0$ & $0$ & $36$  & $0$\\ 
\end{tabular}
\end{center}
\end{rem}

\section{Triangular Lie algebras}

In this section we consider the Lie algebra $\Lt_n(K)$, resp.\ $\Ln_n(K)$, of all upper-triangular, resp.\ 
strictly upper-triangular, $n \times n$ matrices over a general field $K$ again.

Let $e_{i,j}$ denote the  $n\times n$ matrix with 0's everywhere, except a 1 on the $(i,j)$-th spot.
Recall that 
\begin{equation}\label{bracket eij}
[e_{i,j}, e_{k,l}] = \delta_{j,k} e_{i,l} - \delta_{l,i} e_{k,j}.
\end{equation}

\begin{prop} For any $n\geq 2$ we have that 
\[ \AID(\Ln_n(K)) = \Inn(\Ln_n(K)).\]
\end{prop}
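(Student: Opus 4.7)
The plan is to apply Corollary \ref{allfixed}: it suffices to show that for every $D\in\AID(\Ln_n(K))$ determined by a map $\phi_D$, every basis vector $e_{i,j}$ (with $1\le i<j\le n$) is fixed for $D$. Fix such an $e_{i,j}$. By \eqref{bracket eij}, the basis vectors that fail to commute with $e_{i,j}$ fall into two types: Type~(A), the vectors $e_{j,l}$ for $j<l\le n$, satisfying $[e_{j,l},e_{i,j}]=-e_{i,l}$; and Type~(B), the vectors $e_{k,i}$ for $1\le k<i$, satisfying $[e_{k,i},e_{i,j}]=e_{k,j}$. When the total number of such partners is at most $1$---namely, when $(i,j)=(1,n)$ (the central vector, no partner), or when $(i,j)\in\{(1,n-1),(2,n)\}$ (exactly one partner)---Lemma \ref{3.3} immediately gives that $e_{i,j}$ is fixed for $D$.

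In the remaining cases there are at least two partners, and for any two distinct partners $e_a,e_b$ the goal is to apply Lemma \ref{3.7} with the ``coordinate index $i$'' of that lemma played by the pair $(i,j)$, and its two ``landing positions'' $e_l,e_m$ played by the distinct basis vectors $e_{i,l}$ (Type~(A)) or $e_{k,j}$ (Type~(B)) arising from bracketing $e_a,e_b$ with $e_{i,j}$. The key observation is that these landing targets are pairwise distinct across all partners: different $l>j$ give different $e_{i,l}$, different $k<i$ give different $e_{k,j}$, and a Type~(A) target $e_{i,l}$ can never equal a Type~(B) target $e_{k,j}$ because $k<i$ forces $i\neq k$.

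To finish applying Lemma \ref{3.7}, one verifies that the brackets $[e_a,e_{p,q}]$ and $[e_b,e_{p,q}]$, for basis vectors $e_{p,q}\neq e_{i,j}$, never land on either of the chosen targets. Using \eqref{bracket eij}: a bracket $[e_{j,l},e_{p,q}]$ producing $e_{i,l}$ forces $(p,q)=(i,j)$; producing $e_{k,j}$ would require $j=k$, contradicting $k<i<j$; and the analogous checks for $[e_{k,i},e_{p,q}]$ are identical in flavor. With these containments in place, Lemma \ref{3.7} gives $t_{(i,j)}(\phi_D(e_a))=t_{(i,j)}(\phi_D(e_b))$, so $e_{i,j}$ is fixed; Corollary \ref{allfixed} then completes the proof. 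The only obstacle is the bookkeeping in the containment checks---a uniform treatment of the three partner-type combinations (A)-(A), (A)-(B) and (B)-(B)---but each case collapses in a single line of \eqref{bracket eij}.
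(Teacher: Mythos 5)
Your proof is correct, but it follows a genuinely different route from the one in the paper. The paper argues by induction on $n$: it quotients by the ideal $I=\langle e_{1,2},\ldots,e_{1,n}\rangle$ (so that $\Ln_n(K)/I\cong \Ln_{n-1}(K)$), applies the induction hypothesis to reduce to the case $D(\Ln_n(K))\subseteq I$, and then kills $D$ on the superdiagonal generators $e_{i,i+1}$ by subtracting explicit inner derivations, concluding $D=0$ because a derivation vanishing on generators vanishes everywhere. You instead verify directly, with no induction, that every basis vector $e_{i,j}$ is fixed for any $D\in\AID(\Ln_n(K))$, and invoke Corollary \ref{allfixed}. Your case analysis is sound: the non-commuting partners of $e_{i,j}$ are exactly the $e_{j,l}$ ($l>j$) and $e_{k,i}$ ($k<i$), each bracket $[e_{j,l},e_{i,j}]=-e_{i,l}$ and $[e_{k,i},e_{i,j}]=e_{k,j}$ is exactly $\pm$ one basis vector, the landing targets are pairwise distinct, and the containment conditions $[e_a,\Lg_{(i,j)}]\subseteq\Lg_{L,M}$ of Lemma \ref{3.7} reduce via \eqref{bracket eij} to one-line index checks (together with Lemma \ref{3.3} for the at most three vectors $e_{1,n}$, $e_{1,n-1}$, $e_{2,n}$ with at most one partner). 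Your approach buys uniformity and avoids the bookkeeping of repeatedly modifying $D$ by inner derivations, and it showcases the fixed-vector machinery of Section 3 exactly as intended; the paper's inductive argument, on the other hand, is structured so that ``exactly the same technique'' transfers to $\Lt_n(K)$, where the diagonal basis vectors would complicate a direct fixed-vector analysis. The only thing I would ask you to tighten is the containment check for the type (B) targets: for $[e_{j,l},e_{p,q}]$ to hit $e_{k,j}$ there are \emph{two} terms in \eqref{bracket eij} to exclude (the term $\delta_{l,p}e_{j,q}$ forces $j=k$, and the term $\delta_{q,j}e_{p,l}$ forces $l=j$, both impossible); you only mention the first explicitly, though both do collapse immediately.
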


\begin{proof}
The Lie algebra $\Ln_n(K)$ has a basis consisting of the matrices $e_{i,j}$ where $1 \leq i < j \leq n$. From
\eqref{bracket eij} it follows that 
\[[ \Ln_n(K),e_{i,j}]= \langle e_{i,j+1}, e_{i,j+2}, \ldots , e_{i,n}, e_{1,j},e_{2,j}, \ldots ,e_{i-1,j}\rangle.\]
We proceed by induction on $n$. 
For $n=2$ the proposition is trivially true since $\Ln_2(K)\cong K$ is abelian. So we assume that $n>2$ and that the 
result holds for smaller values of $n$. 

Let $D \in \AID(\Ln_n(K))$. Note that $I=\langle e_{1,2}, e_{1,3},\ldots,e_{1,n}\rangle$ is an ideal of $\Ln_n(K)$. This implies that 
$D(I)\subseteq I$. It follows that $D$ induces a derivation $\bar{D}$ of $\Ln_n(K)/I\cong \Ln_{n-1}(K)$.
Of course $\bar{D}\in \AID(\Ln_{n-1}(K))$ and by induction, we can conclude that $\bar{D}$ is an inner derivation.
Let $x\in \Ln_n(K)$ be an element such that $\bar{D}=\ad (\bar{x})$ where $\bar{x}$ denotes the projection of $x$ 
in $\Ln_{n-1}(K)\cong \Ln_n(K)/I$. \\
By replacing $D$ by $D-\ad (x)$ we may assume that $D$ is an almost inner derivation of $\Ln_n(K)$ with 
$D(\Ln_n(K))\subseteq I$. It follows that there exist elements $\beta_3, \beta_4, \ldots ,\beta_n\in K$ such that 
\[ D(e_{2,3})=\beta_3 e_{1,3},\; D(e_{3,4})=\beta_4 e_{1,4},\; D(e_{4,5})=\beta_5 e_{1,5},\;\ldots,
D(e_{n-1,n})=\beta_n e_{1,n}.\]
Let $a=\beta_3 e_{1,2} + \beta_4 e_{1,3} + \cdots + \beta_n e_{1,n-1}$, then for all $i$ with 
$2 \leq i  \leq n-1$ we have that 
\begin{eqnarray*}
\ad (a)(e_{i,i+1}) & = & [ \beta_3 e_{1,2} + \beta_4 e_{1,3} + \cdots + \beta_n e_{1,n-1},e_{i,i+1}] \\
& = & [ \beta_{i+1} e_{1,i}, e_{i,i+1} ] \\
& = & \beta_{i+1} e_{1,i+1} \\
& = & D( e_{i,i+1}).
\end{eqnarray*} 
So, by replacing $D$ with $D-\ad (a)$, we may assume that 
\[ D(e_{2,3})=D(e_{3,4})=\cdots = D(e_{n-1,n})=0.\]
Note that $\ad (a)(I)\subseteq I$, so that also after modifying $D$, we still have that $D(\Ln_n(K))\subseteq I$. \\
There also exist $\alpha_3,\alpha_4,\ldots,\alpha_n\in K$ with 
\[ D(e_{1,2})= \alpha_3 e_{1,3} + \alpha_4 e_{1,4} + \cdots + \alpha_n e_{1,n} .\]
For $3\leq i < n $ we have $[e_{1,2},e_{i,i+1}]=0$, so that 
\begin{align*}
0 & = D[e_{1,2}, e_{i,i+1}]\\
& = [D(e_{1,2}), e_{i,i+1}] + [e_{1,2}, D(e_{i,i+1})]\\
& = [\alpha_3 e_{1,3} + \alpha_4 e_{1,4} + \cdots + \alpha_n e_{1,n}, e_{i,i+1}]+ 0\\
& = \alpha_i e_{1,i+1}.
\end{align*}
It follows that $\alpha_i =0 $ for all $n>i \geq 3$, so that
\[ D(e_{1,2})= \alpha_n e_{1,n} = \ad (-\alpha_n e_{2,n})(e_{1,2}).\]
Note that for $i\geq 2$ we have $\ad (-\alpha_n e_{2,n})(e_{i,i+1})=0$. So by finally replacing $D$ with 
$D+\ad (\alpha_n e_{2,n})$ we find that 
\[ D(e_{i,i+1})=0,\; \forall 1\leq i <n.\]
But this implies that $D=0$, so that the original $D$ is in $\Inn(\Ln_n(K))$, which was to be shown.
\end{proof}

By exactly the same technique one can also prove the following result:

\begin{prop} For any $n\geq 2$ we have  
\[ \AID(\Lt_n(K)) = \Inn(\Lt_n(K)).\]
\end{prop}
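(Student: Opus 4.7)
The plan is to mimic the proof for $\Ln_n(K)$, proceeding by induction on $n$. The base case $n=2$ is dispatched by direct computation: $\Lt_2(K)$ is $3$-dimensional with $Z(\Lt_2(K))=K(e_{1,1}+e_{2,2})$, and a short verification shows $\AID(\Lt_2(K))=\Inn(\Lt_2(K))$. For the inductive step ($n\geq 3$), let $D\in\AID(\Lt_n(K))$ and consider again the ideal $I=\langle e_{1,2},e_{1,3},\ldots,e_{1,n}\rangle$, which is easily checked to be a Lie ideal of $\Lt_n(K)$ via the bracket formula \eqref{bracket eij}. The quotient $\Lt_n(K)/I$ now decomposes as $K\bar{e}_{1,1}\oplus \Lt_{n-1}(K)$, with $\bar{e}_{1,1}$ central. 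Using the direct-sum property $\AID(\Lg\oplus\Lg')=\AID(\Lg)\oplus\AID(\Lg')$ together with $\AID(K)=0$ and the induction hypothesis, the induced derivation $\bar{D}$ is inner, so after subtracting an appropriate $\ad(y)$ we may assume $D(\Lt_n(K))\subseteq I$.

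The middle of the argument now follows the $\Ln_n(K)$ proof almost verbatim. The almost inner condition, combined with $D(\Lt_n(K))\subseteq I$, forces $D(e_{i,i+1})=\beta_{i+1}e_{1,i+1}$ for $2\leq i\leq n-1$, and subtracting $\ad(\sum_{i=2}^{n-1}\beta_{i+1}e_{1,i})$ kills $D$ on those generators. Writing $D(e_{1,2})=\alpha_2e_{1,2}+\alpha_3e_{1,3}+\cdots+\alpha_ne_{1,n}\in I$ and applying $D$ to $[e_{1,2},e_{i,i+1}]=0$ for $3\leq i\leq n-1$ forces $\alpha_3=\cdots=\alpha_{n-1}=0$; subtracting $\ad(\alpha_2e_{1,1}-\alpha_ne_{2,n})$ then zeroes $D(e_{1,2})$ without spoiling the previous step, since this $\ad$ vanishes on every $e_{i,i+1}$ for $i\geq 2$. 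Using $e_{1,j}=[e_{1,j-1},e_{j-1,j}]$ and then $e_{i,j}=[e_{i,i+1},e_{i+1,j}]$ inductively, one concludes $D=0$ on the entire strictly upper triangular part $\Ln_n(K)$.

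The genuinely new step, not present in the $\Ln_n(K)$ argument, is handling the diagonal generators $e_{i,i}$. For $2\leq i\leq n-1$, applying $D$ to $[e_{i,i},e_{i,i+1}]=e_{i,i+1}$ eliminates the ``lower-left'' part of $D(e_{i,i})$ (those $e_{k,i}$ terms with $k<i$), and applying $D$ to $[e_{i,i},e_{k,k+1}]=0$ for $i+1\leq k\leq n-1$ eliminates the ``upper-right'' part except possibly $\nu^{(i)}_n e_{i,n}$; cross-checking with $[e_{i,i},e_{n,n}]=0$ and using the analogous reduction for $D(e_{n,n})$ forces even this last coefficient to vanish, so $D(e_{i,i})=0$ for $2\leq i\leq n-1$. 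The remaining ambiguity collapses to $D(e_{1,1})=c_1 e_{1,n}$, $D(e_{n,n})=c_n e_{1,n}$ with $c_1+c_n=0$, and the derivation identities alone cannot force $c_1=c_n=0$: this is the main obstacle. The key observation is that $\ad(e_{1,n})$ acts on the chosen basis as $e_{1,1}\mapsto -e_{1,n}$, $e_{n,n}\mapsto e_{1,n}$, and annihilates every other basis vector, so a single final modification by $\ad(c_n e_{1,n})$ absorbs both scalars simultaneously without disturbing any previous normalization. Hence $D=0$ after these modifications, and the original $D$ lies in $\Inn(\Lt_n(K))$.
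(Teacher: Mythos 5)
Your proof is correct and is precisely the argument the paper intends: the paper itself offers no details here, stating only that the result follows ``by exactly the same technique'' as for $\Ln_n(K)$, and your write-up carries out that technique faithfully (induction via the ideal $I=\langle e_{1,2},\ldots,e_{1,n}\rangle$, now with quotient $K\oplus\Lt_{n-1}(K)$) while correctly supplying the genuinely new bookkeeping for the diagonal generators. In particular, your observations that the extra $\alpha_2e_{1,2}$-term in $D(e_{1,2})$ is absorbed by $\ad(e_{1,1})$, and that the residual diagonal derivation $e_{1,1}\mapsto -c_ne_{1,n}$, $e_{n,n}\mapsto c_ne_{1,n}$ is exactly $\ad(c_ne_{1,n})$, are the right (and only) points where the triangular case differs from the strictly triangular one.
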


\section{Nilpotent Lie algebras with arbitrary large $\AID(\Lg)/\Inn(\Lg)$}

In the previous sections we had many negative results concerning the existence of non-inner almost inner derivations. 
We want to show now that it is also possible to construct infinite families of Lie algebras $\Lg$ having 
a space $\AID(\Lg)/\Inn(\Lg)$ of arbitrarily large dimension $n$, for any given $n\in \N$. \\[0.5cm]
Consider the following family of $2$-step nilpotent Lie algebras $\Lg_n$ over a general field $K$ of dimension $4n+2$, 
with basis 
\[ t_1,\; t_2,\;x_{1,i},\; x_{2,i},\; y_{1,i},\; y_{2,i} \;\; (1 \leq i \leq n)\]
and non-zero Lie brackets  
\[ [t_1, x_{1,i}] = y_{1,i},\; [t_1,x_{2,i}] = y_{2,i},\; [t_2, x_{2,i}]= y_{1,i}\;\; (1 \leq i \leq n).\]
So we have $\Lg_n=K^{4n}\rtimes K^2$, where $K^{4n}$ is the subspace spanned by the $x_{p,i}$'s and the $y_{p,i}$'s and 
$K^2$ is spanned by $t_1$ and $t_2$.

\begin{prop} For every $n\geq 2$ we have 
\[ \dim (\AID(\Lg_n)/\Inn(\Lg_n)) = n.\]
\end{prop}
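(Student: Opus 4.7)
The plan is to show $\dim(\AID(\Lg_n)/\Inn(\Lg_n))=n$ by matching a lower bound, coming from $n$ explicit non-inner almost inner derivations, with an upper bound obtained by a parameter count. Since $\Lg_n$ is $2$-step nilpotent, $[\Lg_n,\Lg_n]=Z(\Lg_n)=\langle y_{1,i},y_{2,i}\mid 1\le i\le n\rangle$, so $\dim Z(\Lg_n)=2n$ and $\dim\Inn(\Lg_n)=\dim\Lg_n-\dim Z(\Lg_n)=2n+2$. For each $k\in\{1,\ldots,n\}$, define a linear map $D_k\colon\Lg_n\to\Lg_n$ by $D_k(t_2)=y_{1,k}$ and $D_k(v)=0$ on every other basis vector. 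Since $\im D_k\subseteq Z(\Lg_n)$, the derivation identities are automatic. To verify $D_k\in\AID(\Lg_n)$, I would write an arbitrary $x=\alpha t_1+\beta t_2+\sum_i(\alpha_i x_{1,i}+\beta_i x_{2,i})+z$ with $z\in Z(\Lg_n)$; then $D_k(x)=\beta y_{1,k}$, and for a generic $u=\gamma t_1+\delta t_2+\sum_i(\gamma_i x_{1,i}+\delta_i x_{2,i})$ the $y_{1,j}$-coefficient of $[u,x]$ equals $\gamma\alpha_j+\delta\beta_j-\alpha\gamma_j-\beta\delta_j$ and the $y_{2,j}$-coefficient equals $\gamma\beta_j-\alpha\delta_j$. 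A short case split on whether $\alpha$ and $\beta$ vanish lets me solve these equations and produce a $u$ realising $[u,x]=\beta y_{1,k}$. Moreover $D_k\notin\Inn(\Lg_n)$: if $D_k=\ad u$, then $[u,t_1]=-\sum_i\gamma_i y_{1,i}-\sum_i\delta_i y_{2,i}=0$ forces $u\in Kt_1+Kt_2+Z(\Lg_n)$, but then $[u,t_2]=0\ne y_{1,k}$, a contradiction. The same argument applied to a non-trivial linear combination $\sum_k\mu_k D_k$ shows that $D_1,\ldots,D_n$ are linearly independent modulo $\Inn(\Lg_n)$, giving $\dim(\AID(\Lg_n)/\Inn(\Lg_n))\ge n$.

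For the upper bound $\dim\AID(\Lg_n)\le 3n+2$, let $D\in\AID(\Lg_n)$. The basic properties of almost inner derivations together with $[\Lg_n,\Lg_n]=Z(\Lg_n)$ imply that $D$ maps $\Lg_n$ into $Z(\Lg_n)$ and annihilates $Z(\Lg_n)$, so $D$ is determined by its values on $t_1,t_2,x_{1,i},x_{2,i}$. Evaluating the almost-inner condition on these basis vectors yields $D(t_1)\in Z(\Lg_n)$, $D(t_2)\in\langle y_{1,i}\mid i\rangle$, $D(x_{1,i})=a_i y_{1,i}$, and $D(x_{2,i})=c_i y_{1,i}+d_i y_{2,i}$ for scalars $a_i,c_i,d_i\in K$. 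A direct computation gives, for $i\ne j$,
\[
[\Lg_n,x_{1,i}+x_{2,j}]=\langle y_{1,j},\,y_{1,i}+y_{2,j}\rangle,\qquad
[\Lg_n,x_{2,i}+x_{2,j}]=\langle y_{1,i}+y_{1,j},\,y_{2,i}+y_{2,j}\rangle.
\]
Requiring $D(x_{1,i}+x_{2,j})=a_i y_{1,i}+c_j y_{1,j}+d_j y_{2,j}$ to lie in the first space forces $a_i=d_j$, and requiring $D(x_{2,i}+x_{2,j})=c_i y_{1,i}+c_j y_{1,j}+d_i y_{2,i}+d_j y_{2,j}$ to lie in the second forces $c_i=c_j$ and $d_i=d_j$. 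Using $n\ge 2$, these constraints collapse all $a_i$ and $d_j$ to a common $\lambda\in K$ and all $c_i$ to a common $\mu\in K$. The remaining free parameters are the $2n$ coordinates of $D(t_1)$, the $n$ coordinates of $D(t_2)$, and the two scalars $\lambda,\mu$, giving $\dim\AID(\Lg_n)\le 3n+2$.

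Combining both bounds yields $\dim\AID(\Lg_n)=3n+2$, hence $\dim(\AID(\Lg_n)/\Inn(\Lg_n))=(3n+2)-(2n+2)=n$, as desired. The main obstacle is the upper bound step: one must be confident that the constraints extracted from the two chosen sum-combinations $x_{1,i}+x_{2,j}$ and $x_{2,i}+x_{2,j}$ already exhaust all conditions beyond the basis-wise ones. The exact matching with the lower bound provided by the $D_k$ confirms this \emph{a posteriori}; without the explicit $D_k$ one would need to verify that further combinations such as $t_2+x_{1,i}+x_{2,j}$ impose no new restrictions, which is the sort of tedious but mechanical check that could otherwise be streamlined by the fixed-basis-vector machinery of Section~3.
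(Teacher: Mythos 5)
Your proof is correct, and its second half takes a genuinely different route from the paper. For the lower bound you use the derivations $D_k$ with $D_k(t_2)=y_{1,k}$ rather than the paper's $D_i(t_1)=y_{2,i}$; these represent the same classes modulo $\Inn(\Lg_n)$ (indeed $D_i^{\text{paper}}+D_i^{\text{yours}}=\ad(-x_{2,i})$), and your verification that they are almost inner and independent modulo inner derivations is sound. The real divergence is in the spanning step: the paper shows that every basis vector except the $x_{2,i}$ is fixed (using Lemmas 3.3 and 3.7), then applies the reduction of Lemma 3.5 to write an arbitrary $D\in\AID(\Lg_n)$ explicitly as an inner derivation plus a combination of the $D_i$. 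You instead extract necessary linear constraints from the almost-inner condition on the basis vectors and on the two families $x_{1,i}+x_{2,j}$ and $x_{2,i}+x_{2,j}$ (using $n\ge 2$ to collapse the $a_i,d_i$ to one scalar and the $c_i$ to another), obtaining $\dim\AID(\Lg_n)\le 3n+2$, and then close the argument by sandwiching against the lower bound $\dim\AID(\Lg_n)\ge\dim\Inn(\Lg_n)+n=3n+2$. Your computations of the relevant bracket spaces and of $\dim\Inn(\Lg_n)=2n+2$ are all correct, and the sandwich is logically airtight: you do not need to know that your chosen test elements exhaust all constraints, only that they are necessary. What the paper's approach buys is a constructive identification of every almost inner derivation and a demonstration of its general fixed-vector machinery; what yours buys is a shorter, self-contained and more elementary argument that additionally pins down $\AID(\Lg_n)$ exactly as the $(3n+2)$-dimensional space cut out by your constraints. (As a consistency check, your count also explains why $n=1$ is exceptional: the constraints $a_i=d_j$ for $i\ne j$ are then vacuous, matching $\dim\AID(\Lg_{6,1})=6$ in the paper's table.)
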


\begin{proof}
Any element $x$ of $\Lg_n$ can be written uniquely in the form 
\[
x=\alpha_1 t_1 + \alpha_2 t_2 + v 
\]
where $v \in K^{4n}=\langle x_{p,i},\;y_{p,i},\; 1\leq p \leq 2, \;1\leq i \leq n\rangle$.  
Using this notation we define for any $i=1,2,\ldots, n$ a map 
\[ \varphi_{D_i}:\Lg_n\to \Lg_n: x=\alpha_1 t_1 + \alpha_2 t_2 + v \mapsto 
\left\{ \begin{array}{lcl} 
0 &\mbox{if}& \alpha_1 =0 \\
-\frac{\alpha_2}{\alpha_1} x_{1,i} + x_{2,i}& \mbox{if}& \alpha_1 \neq 0.
\end{array} \right.\]
Now let 
\[D_i:\Lg_n\to \Lg_n: x \mapsto D_i(x):=[ x, \varphi_{D_i}(x)]. \]
For $\alpha_1\neq 0$ we have that 
\begin{eqnarray*}
D_i(\alpha_1 t_1 + \alpha_2 t_2 + v) & = & [ \alpha_1 t_1 + \alpha_2 t_2 + v, -\frac{\alpha_2}{\alpha_1} x_{1,i} + x_{2,i}]\\
& = & -\alpha_2 y_{1,i} + \alpha_1 y_{2,i} + \alpha_2 y_{1,i}\\
& = & \alpha_1 y_{2,i} 
\end{eqnarray*}
Also for $\alpha_1=0$, we have that $D_i(\alpha_1 t_1 + \alpha_2 t_2 + v) = 0 = \alpha_1 y_{2,i}$. 
Hence $D_i:\Lg_n \to \Lg_n$ is a linear map, having its image in the center of $\Lg_n$, and so $D_i$ is a derivation. 
By construction, $D_i\in \AID(\Lg_n)$. We claim that the set of all $D_i+ \Inn(\Lg_n)$ ($1\leq i \leq n$) forms 
a basis of $\AID(\Lg_n)/\Inn(\Lg_n)$. \\[0.5cm]
We will first show that this is a linearly independent set. Assume that 
$\sum_{i=1}^n \beta_i D_i\in \Inn(\Lg_n)$, then 
\[ 
\sum_{i=1}^n \beta_i D_i =\ad (\alpha_1 t_1 + \alpha_2 t_2 + v)
\]
for some $\alpha_1,\alpha_2 \in K,\; v \in K^{4n}$. As $ \sum_{i=1}^n \beta_i D_i(x_{1,1}) = 0$, it follows that 
$0 = [\alpha_1 t_1 + \alpha_2 t_2 + v,  x_{1,1}] = \alpha_1 y_{1,1}$, so that $\alpha_1=0$.
Analogously, the fact that $ \sum_{i=1}^n \beta_i D_i(x_{2,1}) = 0$ now leads to $\alpha_2=0$, so 
$ \sum_{i=1}^n \beta_i D_i = \ad (v)$ for some $v \in K^{4n}$. But then 
\begin{align*}
\sum_{i=1}^n \beta_i y_{2,i} & = \sum_{i=1}^n \beta_i D_i (t_1) = [v, t_1], \\
0 & = \sum_{i=1}^n \beta_i D_i (t_2)=[v,t_2].
\end{align*}

The second equation above shows that $v$ has no components in the $x_{2,i}$'s and thus is $[v,t_1] = 0$. 
Using this in the first equation above leads to $\beta_1=\beta_2=\cdots=\beta_n=0$. \\[0.2cm]
Next we have to verify that the set is generating. Let $D\in \AID(\Lg_n)$. We have to show that 
\[ 
D =  \sum_{i=1}^n \beta_i D_i + \ad (x)
\]
for some $\beta_1,\beta_2,\ldots, \beta_n\in K\mbox{ and } x \in \Lg_n$. Let $D$ be determined by a map $\varphi_D$.
Many of the basis vectors turn out to be fixed: \\[0.3cm]
$1.$ As any vector $y_{p,q}$ belongs to the center of $\Lg_n$, all of these basis vectors are fixed. \\[0.1cm]
$2.$ Also any vector $x_{1,i}$ is fixed, since its centralizer is of codimension 1 in $\Lg_n$, see Lemma \ref{3.3}. \\[0.1cm]
$3.$ To see that $t_2$ is fixed, note that the basis vectors not belonging to $C_{\Lg_n}(t_2)$ are the vectors
$x_{2,i}$. When we apply Lemma \ref{3.7} with $e_i=t_2,\;e_j=x_{2,i},\;e_k=x_{2,j},\;e_l=y_{1,i}$ and $e_m=y_{1,j}$ we can deduce that 
\[ t_{t_2}(\varphi_D(x_{2,i})) = t_{t_2}(\varphi_D(x_{2,j})) \]
from which is follows that $t_2$ is fixed. \\[0.1cm]
$4.$ To see that  $t_1$ is fixed, we start with applying Lemma \ref{3.7} with $e_i=t_1$, $e_j=x_{1,i}$, $e_k=x_{1,j}$, 
$e_l=y_{1,i}$ and $e_{m}= y_{1,j}$. This gives us that 
\[ t_{t_1}(\varphi_D(x_{1,i}) ) =  t_{t_1}(\varphi_D(x_{1,j}) )\mbox{ for all } 1\leq i,j\leq n.\]
Now applying Lemma \ref{3.7} for $i\neq j$ with  
 with $e_i=t_1$, $e_j=x_{1,i}$, $e_k=x_{2,j}$, $e_l=y_{1,i}$ and $e_{m}= y_{2,j}$ we find that 
 \[ t_{t_1}(\varphi_D(x_{1,i}) ) =  t_{t_1}(\varphi_D(x_{2,j}) )\mbox{ for all } 1\leq i,j\leq n \mbox{ with }i\neq j.\]
 Together with the above (and knowing that $n\geq 2$) we can conclude that 
 \[ t_{t_1}(\varphi_D(x_{1,i}) ) =  t_{t_1}(\varphi_D(x_{1,j}) ) =  t_{t_1}(\varphi_D(x_{2,k}) )\mbox{ for all } 1\leq i,j,k\leq n,\]
 showing that $t_1$ is fixed. \\[0.3cm]
The only basisvectors which are not fixed are the vectors $x_{2,i}$. By applying Lemma \ref{3.5} for every fixed basis 
vector, we may now assume that, after changing $D$ up to an inner derivation, we have for each basis vector $x$ that 
$\varphi_D(x)=\sum_{i=1}^n \beta_i(x) x_{2,i}$ for some $\beta_i(x) \in K$.
By changing $D$ to $D+\ad (\varphi_D(t_2))$, we may suppose that $\varphi_D(t_2) = 0$,  and so $D(t_2) = [t_2,\varphi_D(t_2)]=0$. 
Let $x$ be one of the basis vectors $x_{p,i}$ or $y_{p,i}$, then also 
\[ D(x) = [x, \varphi_D(x)] = [ x, \sum_{i=1}^n \beta_i(x) x_{2,i}] =0.\]
Finally, we have that 
\[ D(t_1) = [t_1, \varphi_D(t_1)] = [ t_1, \sum_{i=1}^n \beta_i(t_1) x_{2,i}]= \sum_{i=1}^n \beta_i(t_1)y_{2,i}. \]
As a conclusion, we find that, after changing $D$ up to an inner derivation, we obtain 
\[ D =  \sum_{i=1}^n \beta_i(t_1) D_i.\]
\end{proof}

\begin{rem}
For $n=1$ the basis vector $t_1$ is not fixed. Then the algebra $\Lg_1$ of the above family
is isomorphic to $\Lg_{6,1}$ of Remark $\ref{8.6}$, which is also the algebra of 
Example $(i)$ of  \cite{GOW}, page $245$. For this algebra we know that
$\dim(\AID(\Lg_1)/\Inn(\Lg_1)) = 2$. 
\end{rem}

\section{Acknowledgements}
The first author was supported in part by the Austrian Science Fund (FWF), grant P28079 and grant I3248. 
The second and third author are supported by  long term structural funding -- Methusalem grant of the Flemish Government.

\end{document}